\newcommand{\Hom}{\operatorname{Hom}\nolimits}
\renewcommand{\Im}{\operatorname{Im}\nolimits}
\newcommand{\stmod}{\operatorname{\underline{mod}}\nolimits}
\newcommand{\Mod}{\operatorname{Mod}\nolimits}
\newcommand{\Ker}{\operatorname{Ker}\nolimits}
\newcommand{\Ho}{\operatorname{H}\nolimits}
\newcommand{\cx}{\operatorname{cx}\nolimits}
\newcommand{\proj}{\operatorname{proj}\nolimits}
\newcommand{\Fact}{\operatorname{\bf{Fact}}\nolimits}
\newcommand{\HFact}{\operatorname{\bf{HFact}}\nolimits}
\newcommand{\HSeq}{\operatorname{\bf{HSeq}}\nolimits}
\newcommand{\K}{\operatorname{\bf{K}}\nolimits}
\newcommand{\add}{\operatorname{add}\nolimits}
\newcommand{\Ktac}{\operatorname{\bf{K_{tac}}}\nolimits}
\newcommand{\A}{\mathscr{A}}
\newcommand{\C}{\mathscr{C}}
\newcommand{\D}{\mathscr{D}}
\newcommand{\Proj}{\mathscr{P}}
\newcommand{\Free}{\mathscr{F}}
\newcommand{\diagram}[3]{\matrix (#1) [matrix of math nodes,row
  sep={#2},column sep={#3},text height=1.5ex,text
  depth=0.25ex]}
\newtheorem{theorem}{Theorem}[section]
\newtheorem{corollary}[theorem]{Corollary}
\newtheorem{lemma}[theorem]{Lemma}
\theoremstyle{definition}
\theoremstyle{definition}
\newtheorem{example}[theorem]{Example}
\theoremstyle{definition}
\newtheorem{remark}[theorem]{Remark}
\theoremstyle{remark}
\theoremstyle{definition}
\theoremstyle{definition}
\theoremstyle{definition}
\begin{document}

\title{Categorical matrix factorizations}

\author{Petter Andreas Bergh and David A.\ Jorgensen}

\address{Petter Andreas Bergh \\ Institutt for matematiske fag \\
NTNU \\ N-7491 Trondheim \\ Norway} \email{petter.bergh@ntnu.no}
\address{David A.\ Jorgensen \\ Department of mathematics \\ University
of Texas at Arlington \\ Arlington \\ TX 76019 \\ USA}
\email{djorgens@uta.edu}

\subjclass[2020]{13D02, 18E05, 18G35, 18G80}

\keywords{Matrix factorizations, triangulated categories}

\thanks{Part of this work was done while we were visiting the Mittag-Leffler Institute in February and March 2015, only seven years ago. We would like to thank the organizers of the Representation Theory program for inviting us to spend time at that wonderful place.}

\begin{abstract}
In this paper we give a purely categorical construction of $d$-fold matrix factorizations of a natural transformation, for any even integer $d$. This recovers the classical definition of those for regular elements in commutative rings due to Eisenbud. We explore some natural functors between associated triangulated categories, and show that when $d=2$ these are full and faithful, and in some cases equivalences.
\end{abstract}

\maketitle

\section{Introduction}
\label{sec:intro}

In 1980, Eisenbud introduced matrix factorizations in \cite{Eisenbud} for elements in commutative rings. The motivation was to study free resolutions over the corresponding factor rings, in particular hypersurface rings. It was shown that over such a ring, every minimal free resolution corresponds to a matrix factorization over the ambient regular ring. The homotopy category of matrix factorizations is triangulated, and it was remarked by Buchweitz in \cite{Buchweitz} that Eisenbud’s result implies that this homotopy category is equivalent to the singularity category of the corresponding hypersurface ring. In \cite{Orlov}, Orlov gave an explicit proof of this fact.

In this paper, for any even integer $d \ge 2$ we give a categorical construction of $d$-fold matrix factorizations for any suspended category and natural transformation commuting with suspension. Our construction recovers the classical notion of matrix factorization due to Eisenbud referenced above. It also recovers other notions in recent literature, in particular the $2$-fold factorizations in abelian categories defined in \cite{BallardEtAl}.
 
The paper is organized as follows. In Section \ref{sec:main} we give our construction of $d$-fold matrix factorizations, and show that the collection of such objects naturally forms an algebraic triangulated category. We also show that specific restrictions yield the previously studied triangulated categories of matrix factorizations.
 
In Section \ref{sec:tac} we study natural triangle functors between our triangulated category of matrix factorizations and other well-known triangulated categories. We show that when $d=2$, these functors are full and faithful, and in some contexts, equivalences. We end with some examples, which illustrate new sources of concrete matrix factorizations.

\section{Factorizations}
\label{sec:main}

Fix an even integer $d \ge 2$, and let $( \C, S )$ be a suspended additive category, that is, an additive category $\C$ together with an additive automorphism $S \colon \C \to \C$. Furthermore, fix a natural transformation $\eta \colon 1_{\C} \to S$ with the property that
$$\eta_{S(M)} = S ( \eta_M )$$
for every object $M$ of $\C$ (we shall usually drop parentheses when we apply $S$ to objects and morphisms).

A $d$-fold \emph{$( \C, S )$-factorization} $(M_i,f_i)$ of $\eta$ is a sequence
\begin{center}
\begin{tikzpicture}
\diagram{d}{3em}{4em}{
M_1 & M_2 & \cdots & M_d & SM_1 \\
 };
\path[->, font = \scriptsize, auto]
(d-1-1) edge node{$f_1$} (d-1-2)
(d-1-2) edge node{$f_2$} (d-1-3)
(d-1-3) edge node{$f_{d-1}$} (d-1-4)
(d-1-4) edge node{$f_d$} (d-1-5);
\end{tikzpicture}
\end{center}
of $d$ morphisms in $\C$, in which every $d$-fold composition equals $\eta$:
\begin{eqnarray*}
f_d \circ f_{d-1} \circ \cdots \circ f_1 & = & \eta_{M_1} \\
Sf_1 \circ f_d \circ \cdots \circ f_2 & = & \eta_{M_2} \\
& \vdots & \\
S f_{d-1} \circ \cdots \circ Sf_1 \circ f_d & = & \eta_{M_d}
\end{eqnarray*}
A \emph{morphism} $\varphi \colon (M_i,f_i) \to (N_i,g_i)$, between two such $(\C,S)$-factorizations of $\eta$, is a sequence $(\varphi_1, \dots, \varphi_d)$ of morphisms in $\C$ such that the diagram
\begin{center}
\begin{tikzpicture}
\diagram{d}{3em}{4em}{
M_1 & M_2 & \cdots & M_d & SM_1 \\
N_1 & N_2 & \cdots & N_d & SN_1 \\
 };
\path[->, font = \scriptsize, auto]
% upper horizontal maps
(d-1-1) edge node{$f_1$} (d-1-2)
(d-1-2) edge node{$f_2$} (d-1-3)
(d-1-3) edge node{$f_{d-1}$} (d-1-4)
(d-1-4) edge node{$f_d$} (d-1-5)
% lower horizontal maps
(d-2-1) edge node{$g_1$} (d-2-2)
(d-2-2) edge node{$g_2$} (d-2-3)
(d-2-3) edge node{$g_{d-1}$} (d-2-4)
(d-2-4) edge node{$g_d$} (d-2-5)
% vertical maps
(d-1-1) edge node{$\varphi_1$} (d-2-1)
(d-1-2) edge node{$\varphi_2$} (d-2-2)
(d-1-4) edge node{$\varphi_d$} (d-2-4)
(d-1-5) edge node{$S \varphi_1$} (d-2-5);
\end{tikzpicture}
\end{center}
commutes. It is an \emph{isomorphism} if all the $\varphi_i$, and hence also $S \varphi_1 $, are isomorphisms. The composition of two morphisms is a new morphism, thus we obtain the category $\Fact_d ( \C ,S, \eta )$ of $d$-fold $( \C ,S)$-factorizations of $\eta$. Since the original category $\C$ and its automorphism $S$ are additive, so is the category $\Fact_d ( \C ,S, \eta )$, in a natural way. Addition of morphisms is induced from $\C$, and the zero object is the trivial $( \C ,S)$-factorization $(0,0)$ of $\eta$. The coproduct of two $( \C ,S)$-factorizations of $\eta$ is performed pointwise, with the result being a new $( \C ,S)$-factorization since
$$\eta_{M \oplus N} = \left [ 
\begin{smallmatrix} \eta_M & 0 \\ 0 & \eta_N \end{smallmatrix} 
\right ]$$
for all objects $M,N \in \C$.

Let $\varphi = ( \varphi_i )$ and $\varphi' = ( \varphi_i' )$ be two parallell morphisms in $\Fact_d ( \C ,S, \eta )$, from $(M_i ,f_i)$ to $(N_i ,g_i)$. Then $\varphi$ is \emph{homotopic} to $\varphi'$ if there exist diagonal morphisms
\begin{center}
\begin{tikzpicture}
\diagram{d}{4em}{4.5em}{
M_1 & M_2 & \cdots & M_{d-1} & M_d & SM_1 \\
N_1 & N_2 & \cdots & N_{d-1} & N_d & SN_1 \\
 };
\path[->, font = \scriptsize, auto]
% upper horizontal maps
(d-1-1) edge node{$f_1$} (d-1-2)
(d-1-2) edge node{$f_2$} (d-1-3)
(d-1-3) edge node{$f_{d-2}$} (d-1-4)
(d-1-4) edge node{$f_{d-1}$} (d-1-5)
(d-1-5) edge node{$f_d$} (d-1-6)
% lower horizontal maps
(d-2-1) edge node{$g_1$} (d-2-2)
(d-2-2) edge node{$g_2$} (d-2-3)
(d-2-3) edge node{$g_{d-2}$} (d-2-4)
(d-2-4) edge node{$g_{d-1}$} (d-2-5)
(d-2-5) edge node{$g_d$} (d-2-6)
% vertical maps
(d-1-1) edge[bend right=20] node[left]{$\varphi_1$} (d-2-1)
(d-1-1) edge[bend left=20] node[right]{$\varphi_1'$} (d-2-1)
(d-1-2) edge[bend right=20] node[left]{$\varphi_2$} (d-2-2)
(d-1-2) edge[bend left=20] node[right]{$\varphi_2'$} (d-2-2)
(d-1-4) edge[bend right=20] node[left]{$\varphi_{d-1}$} (d-2-4)
(d-1-4) edge[bend left=20] node[right]{$\varphi_{d-1}'$} (d-2-4)
(d-1-5) edge[bend right=20] node[left]{$\varphi_d$} (d-2-5)
(d-1-5) edge[bend left=20] node[right]{$\varphi_d'$} (d-2-5)
(d-1-6) edge[bend right=20] node[left]{$S \varphi_1 $} (d-2-6)
(d-1-6) edge[bend left=20] node[right]{$S \varphi_1' $} (d-2-6)
% diagonal maps
(d-1-2) edge node[near start, left, xshift=-1mm]{$s_1$} (d-2-1)
(d-1-5) edge node[near start, left, xshift=-1mm]{$s_{d-1}$} (d-2-4)
(d-1-6) edge node[near start, left, xshift=-1mm]{$s_d$} (d-2-5);
\end{tikzpicture}
\end{center}
satisfying
\begin{eqnarray*}
\varphi_1 - \varphi_1' & = & s_1 \circ f_1 + S^{-1}g_d \circ S^{-1}s_d \\
\varphi_2 - \varphi_2' & = & s_2 \circ f_2 + g_1 \circ s_1 \\
& \vdots & \\
\varphi_d - \varphi_d' & = & s_d \circ f_d + g_{d-1} \circ s_{d-1} \\
\end{eqnarray*}
We write $\varphi \sim \varphi'$; this is an equivalence relation on the set of morphisms between the objects $(M_i ,f_i)$ and $(N_i, g_i)$. The equivalence class of $\varphi$ is denoted by $[ \varphi ]$. Note that if $\varphi, \varphi', \theta, \theta'$ are four morphisms in $\Fact_d ( \C,S,\eta )$ from $(M_i ,f_i)$ to $(N_i, g_i)$, with $\varphi \sim \varphi'$ and $\theta \sim \theta'$, then $( \varphi + \theta ) \sim ( \varphi' + \theta' )$. Consequently, homotopies are compatible with addition of morphisms, and addition $[ \varphi ] + [ \theta ] = [ \varphi + \theta ]$ of equivalence classes is therefore well defined. Moreover, homotopies are also compatible with composition of morphisms in $\Fact_d ( \C,S,\eta )$: if
\begin{center}
\begin{tikzpicture}
\diagram{d}{3em}{4em}{
(M_i,f_i) & (N_i,g_i) & (L_i,h_i) \\
 };
\path[->, font = \scriptsize, auto]
(d-1-1) edge[bend left] node{$\varphi$} (d-1-2)
(d-1-1) edge[bend right] node{$\varphi'$} (d-1-2)
(d-1-2) edge[bend left] node{$\psi$} (d-1-3)
(d-1-2) edge[bend right] node{$\psi'$} (d-1-3);
\end{tikzpicture}
\end{center}
are four morphisms with $\varphi \sim \varphi'$ and $\psi \sim \psi'$, then $( \psi \circ \varphi ) \sim ( \psi' \circ \varphi' )$. Thus composition $[ \psi ] \circ [ \varphi ] = [ \psi \circ \varphi ]$ of equivalence classes is well defined, too. Now form the homotopy category $\HFact_d ( \C,S,\eta )$ of $\Fact_d ( \C,S,\eta )$. Its objects are the same as the objects of $\Fact_d ( \C,S,\eta )$, but the morphisms are the homotopy equivalence classes of morphisms from $\Fact_d ( \C,S,\eta )$. We have just seen that this is indeed a category, whose morphism sets are abelian groups. The zero object (which is unique only up to homotopy) and coproducts are inherited from $\Fact_d ( \C,S,\eta )$, giving $\HFact_d ( \C,S,\eta )$ the natural structure of an additive category.

\begin{remark}
\label{rem:homotopy}
Homotopies do not in general commute with the maps in the corresponding $( \C ,S)$-factorizations. Thus, for the homotopy $s$ above, it need not be the case that $g_i \circ s_i = s_{i+1} \circ f_{i+1}$. However, as in the case of ordinary complexes, homotopies do commute with the \emph{square} of the maps in the factorizations: $g_{i+1} \circ g_i \circ s_i = s_{i+2} \circ f_{i+2} \circ f_{i+1}$. This follows from the defining equations combined with the fact that $\varphi$ and $\varphi'$ are morphisms.
\end{remark}

We shall now equip the homotopy category $\HFact_d ( \C,S,\eta )$ with the structure of a triangulated category, in terms of standard triangles defined in much the same way as for the homotopy category of complexes. The \emph{suspension} $\Sigma (M_i ,f_i)$ of a $( \C ,S)$-factorization $(M_i ,f_i)$ of $\eta$ is the sequence
\begin{center}
\begin{tikzpicture}
\diagram{d}{3em}{4em}{
M_2 & M_3 & \cdots & M_d & SM_1 & SM_2 \\
 };
\path[->, font = \scriptsize, auto]
(d-1-1) edge node{$-f_2$} (d-1-2)
(d-1-2) edge node{$-f_3$} (d-1-3)
(d-1-3) edge node{$-f_{d-1}$} (d-1-4)
(d-1-4) edge node{$-f_d$} (d-1-5)
(d-1-5) edge node{$-Sf_1$} (d-1-6);
\end{tikzpicture}
\end{center}
of morphisms in $\C$. This is again a $d$-fold $( \C, S)$-factorization of $\eta$. Namely, since $d$ is even, the signs cancel when $d$ morphisms are composed. Moreover, the equality $\eta_{SM} = S \eta_M$ guarantees that when $d$ consecutive (possibly suspended) morphisms are composed, the result is a component map of $\eta$. Next, let $\varphi$ be a morphism
\begin{center}
\begin{tikzpicture}
\diagram{d}{3em}{4em}{
M_1 & M_2 & \cdots & M_d & SM_1 \\
N_1 & N_2 & \cdots & N_d & SN_1 \\
 };
\path[->, font = \scriptsize, auto]
% upper horizontal maps
(d-1-1) edge node{$f_1$} (d-1-2)
(d-1-2) edge node{$f_2$} (d-1-3)
(d-1-3) edge node{$f_{d-1}$} (d-1-4)
(d-1-4) edge node{$f_d$} (d-1-5)
% lower horizontal maps
(d-2-1) edge node{$g_1$} (d-2-2)
(d-2-2) edge node{$g_2$} (d-2-3)
(d-2-3) edge node{$g_{d-1}$} (d-2-4)
(d-2-4) edge node{$g_d$} (d-2-5)
% vertical maps
(d-1-1) edge node{$\varphi_1$} (d-2-1)
(d-1-2) edge node{$\varphi_2$} (d-2-2)
(d-1-4) edge node{$\varphi_d$} (d-2-4)
(d-1-5) edge node{$S \varphi_1$} (d-2-5);
\end{tikzpicture}
\end{center}
in $\Fact_d ( \C,S, \eta )$. From the suspensions of the two $( \C, S)$-factorizations of $\eta$ involved, we obtain the commutative diagram
\begin{center}
\begin{tikzpicture}
\diagram{d}{3em}{4em}{
M_2 & M_3 & \cdots & M_d & SM_1 & SM_2 \\
N_2 & N_3 & \cdots & N_d & SN_1 & SN_2 \\
 };
\path[->, font = \scriptsize, auto]
% upper horizontal maps
(d-1-1) edge node{$-f_2$} (d-1-2)
(d-1-2) edge node{$-f_3$} (d-1-3)
(d-1-3) edge node{$-f_{d-1}$} (d-1-4)
(d-1-4) edge node{$-f_d$} (d-1-5)
(d-1-5) edge node{$-Sf_1$} (d-1-6)
% lower horizontal maps
(d-2-1) edge node{$-g_2$} (d-2-2)
(d-2-2) edge node{$-g_3$} (d-2-3)
(d-2-3) edge node{$-g_{d-1}$} (d-2-4)
(d-2-4) edge node{$-g_d$} (d-2-5)
(d-2-5) edge node{$-Sg_1$} (d-2-6)
% vertical maps
(d-1-1) edge node{$\varphi_2$} (d-2-1)
(d-1-2) edge node{$\varphi_3$} (d-2-2)
(d-1-4) edge node{$\varphi_d$} (d-2-4)
(d-1-5) edge node{$S \varphi_1$} (d-2-5)
(d-1-6) edge node{$S \varphi_2$} (d-2-6);
\end{tikzpicture}
\end{center}
displaying a morphism in $\Fact_d ( \C,S, \eta )$ from $\Sigma (M_i,f_i)$ to $\Sigma (N_i,g_i)$. This assignment is easily seen to respect homotopies: two homotopic morphisms from $(M_i,f_i)$ to $(N_i,g_i)$ give two homotopic morphisms from $\Sigma (M_i,f_i)$ to $\Sigma (N_i,g_i)$. Moreover, the suspension respects coproducts, addition and composition of morphisms. We therefore obtain an additive functor
\begin{center}
\begin{tikzpicture}
\diagram{d}{3em}{4em}{
\HFact_d \left ( \C,S, \eta \right ) & \HFact_d \left ( \C,S, \eta \right ) \\
 };
\path[->, font = \scriptsize, auto]
(d-1-1) edge node{$\Sigma$} (d-1-2);
\end{tikzpicture}
\end{center}
which is clearly an automorphism; its inverse $\Sigma^{-1}$ is given by right rotation.

Having defined the suspension $\Sigma$ on $\HFact_d ( \C,S, \eta )$, we next define mapping cones as in the classical case. Let $\varphi$ be the morphism above. Its \emph{mapping cone}, denoted $C_{\varphi}$, is the sequence
\begin{center}
\begin{tikzpicture}
\diagram{d}{3em}{4em}{
M_2 \oplus N_1 & M_3 \oplus N_2 & \cdots & SM_1 \oplus N_d & SM_2 \oplus SN_1 \\
 };
\path[->, font = \scriptsize, auto]
(d-1-1) edge node{$\left [ \begin{smallmatrix} -f_2 & 0 \\ \varphi_2 & g_1 \end{smallmatrix} \right ]$} (d-1-2)
(d-1-2) edge node{$\left [ \begin{smallmatrix} -f_3 & 0 \\ \varphi_3 & g_2 \end{smallmatrix} \right ]$} (d-1-3)
(d-1-3) edge node{$\left [ \begin{smallmatrix} -f_d & 0 \\ \varphi_d & g_{d-1} \end{smallmatrix} \right ]$} (d-1-4)
(d-1-4) edge node{$\left [ \begin{smallmatrix} -Sf_1 & 0 \\ S \varphi_1 & g_d \end{smallmatrix} \right ]$} (d-1-5);
\end{tikzpicture}
\end{center}
in $\C$. This is again a $d$-fold $( \C, S)$-factorization of $\eta$; the commuting squares in the morphism diagram ensure that the product of $d$ consecutive (possibly suspended) matrices is always a component map of $\eta$.

\begin{remark}
\label{rem:deven}
The construction of mapping cones is the main reason why the integer $d$ must be even. Consider the composition of the $d$ maps. For the cone to be a $( \C, S)$-factorization of $\eta$, this composition must equal $\eta_{M_2 \oplus N_1}$, or $\left [ \begin{smallmatrix} \eta_{M_2} & 0 \\ 0 & \eta_{N_1} \end{smallmatrix} \right ]$ in matrix notation. The even number of commuting squares in the morphism diagram guarantees that the lower left entry of this matrix is zero. If $d$ were odd, the composition would equal $\left [ \begin{smallmatrix} \eta_{M_2} & 0 \\ h & \eta_{N_1} \end{smallmatrix} \right ]$, with $h = S \varphi_1 \circ f_d \circ \cdots \circ f_2$. Thus, in this case, the composition would not equal $\eta_{M_2 \oplus N_1}$, in general.
\end{remark}

\sloppy Given a morphism $\varphi \colon (M_i,f_i) \to (N_i,g_i)$ as above, there is a natural morphism
\begin{center}
\begin{tikzpicture}
\diagram{d}{3em}{4em}{
(N_i,g_i) & C_{\varphi} \\
 };
\path[->, font = \scriptsize, auto]
(d-1-1) edge node{$i_{\varphi}$} (d-1-2);
\end{tikzpicture}
\end{center}
in $\Fact_d ( \C,S, \eta )$, displayed in the diagram
\begin{center}
\begin{tikzpicture}
\diagram{d}{3em}{4em}{
N_1 & N_2 & \cdots & N_d & SN_1 \\
M_2 \oplus N_1 & M_3 \oplus N_2 & \cdots & SM_1 \oplus N_d & SM_2 \oplus SN_1 \\
 };
\path[->, font = \scriptsize, auto]
% upper horizontal maps
(d-1-1) edge node{$g_1$} (d-1-2)
(d-1-2) edge node{$g_2$} (d-1-3)
(d-1-3) edge node{$g_{d-1}$} (d-1-4)
(d-1-4) edge node{$g_d$} (d-1-5)
% lower horizontal maps
(d-2-1) edge node{$\left [ \begin{smallmatrix} -f_2 & 0 \\ \varphi_2 & g_1 \end{smallmatrix} \right ]$} (d-2-2)
(d-2-2) edge node{$\left [ \begin{smallmatrix} -f_3 & 0 \\ \varphi_3 & g_2 \end{smallmatrix} \right ]$} (d-2-3)
(d-2-3) edge node{$\left [ \begin{smallmatrix} -f_d & 0 \\ \varphi_d & g_{d-1} \end{smallmatrix} \right ]$} (d-2-4)
(d-2-4) edge node{$\left [ \begin{smallmatrix} -Sf_1 & 0 \\ S \varphi_1 & g_d \end{smallmatrix} \right ]$} (d-2-5)
% vertical maps
(d-1-1) edge node{$\left [ \begin{smallmatrix} 0 \\ 1 \end{smallmatrix} \right ]$} (d-2-1)
(d-1-2) edge node{$\left [ \begin{smallmatrix} 0 \\ 1 \end{smallmatrix} \right ]$} (d-2-2)
(d-1-4) edge node{$\left [ \begin{smallmatrix} 0 \\ 1 \end{smallmatrix} \right ]$} (d-2-4)
(d-1-5) edge node{$\left [ \begin{smallmatrix} 0 \\ 1 \end{smallmatrix} \right ]$} (d-2-5);
\end{tikzpicture}
\end{center}
Similarly, there is a natural morphism
\begin{center}
\begin{tikzpicture}
\diagram{d}{3em}{4em}{
C_{\varphi} & \Sigma (M_i,f_i) \\
 };
\path[->, font = \scriptsize, auto]
(d-1-1) edge node{$\pi_{\varphi}$} (d-1-2);
\end{tikzpicture}
\end{center}
displayed in the diagram
\begin{center}
\begin{tikzpicture}
\diagram{d}{3em}{4em}{
M_2 \oplus N_1 & M_3 \oplus N_2 & \cdots & SM_1 \oplus N_d & SM_2 \oplus SN_1 \\
M_2 & M_3 & \cdots & SM_1 & SM_2 \\
 };
\path[->, font = \scriptsize, auto]
% upper horizontal maps
(d-1-1) edge node{$\left [ \begin{smallmatrix} -f_2 & 0 \\ \varphi_2 & g_1 \end{smallmatrix} \right ]$} (d-1-2)
(d-1-2) edge node{$\left [ \begin{smallmatrix} -f_3 & 0 \\ \varphi_3 & g_2 \end{smallmatrix} \right ]$} (d-1-3)
(d-1-3) edge node{$\left [ \begin{smallmatrix} -f_d & 0 \\ \varphi_d & g_{d-1} \end{smallmatrix} \right ]$} (d-1-4)
(d-1-4) edge node{$\left [ \begin{smallmatrix} -Sf_1 & 0 \\ S \varphi_1 & g_d \end{smallmatrix} \right ]$} (d-1-5)
% lower horizontal maps
(d-2-1) edge node{$-f_2$} (d-2-2)
(d-2-2) edge node{$-f_3$} (d-2-3)
(d-2-3) edge node{$-f_d$} (d-2-4)
(d-2-4) edge node{$-Sg_1$} (d-2-5)
% vertical maps
(d-1-1) edge node{$\left [ \begin{smallmatrix} 1 & 0 \end{smallmatrix} \right ]$} (d-2-1)
(d-1-2) edge node{$\left [ \begin{smallmatrix} 1 & 0 \end{smallmatrix} \right ]$} (d-2-2)
(d-1-4) edge node{$\left [ \begin{smallmatrix} 1 & 0 \end{smallmatrix} \right ]$} (d-2-4)
(d-1-5) edge node{$\left [ \begin{smallmatrix} 1 & 0 \end{smallmatrix} \right ]$} (d-2-5);
\end{tikzpicture}
\end{center}
We now follow normal procedure. Define a \emph{triangle} in the homotopy category $\HFact_d ( \C,S, \eta )$ to be a sequence
\begin{center}
\begin{tikzpicture}
\diagram{d}{3em}{4em}{
X & Y & Z & \Sigma X \\
 };
\path[->, font = \scriptsize, auto]
(d-1-1) edge node{$u$} (d-1-2)
(d-1-2) edge node{$v$} (d-1-3)
(d-1-3) edge node{$w$} (d-1-4);
\end{tikzpicture}
\end{center}
of objects and morphisms. A \emph{morphism} between two such triangles is a commutative diagram
\begin{center}
\begin{tikzpicture}
\diagram{d}{3em}{4em}{
X_1 & Y_1 & Z_1 & \Sigma X_1 \\
X_2 & Y_2 & Z_2 & \Sigma X_2 \\
 };
\path[->, font = \scriptsize, auto]
% upper horizontal maps
(d-1-1) edge node{$u_1$} (d-1-2)
(d-1-2) edge node{$v_1$} (d-1-3)
(d-1-3) edge node{$w_1$} (d-1-4)
% lower horizontal maps
(d-2-1) edge node{$u_2$} (d-2-2)
(d-2-2) edge node{$v_2$} (d-2-3)
(d-2-3) edge node{$w_2$} (d-2-4)
% vertical maps
(d-1-1) edge node{$\alpha$} (d-2-1)
(d-1-2) edge node{$\beta$} (d-2-2)
(d-1-3) edge node{$\gamma$} (d-2-3)
(d-1-4) edge node{$\Sigma \alpha$} (d-2-4);
\end{tikzpicture}
\end{center}
in $\HFact_d ( \C,S, \eta )$, and it is an \emph{isomorphism} of triangles if all the vertical morphisms are isomorphisms in $\HFact_d ( \C,S, \eta )$. For a morphism $\varphi \colon (M_i,f_i) \to (N_i,g_i)$ in $\Fact_d ( \C,S, \eta )$, we call the corresponding triangle
\begin{center}
\begin{tikzpicture}
\diagram{d}{3em}{4em}{
(M_i,f_i) & (N_i,g_i) & C_{\varphi} & \Sigma (M_i,f_i) \\
 };
\path[->, font = \scriptsize, auto]
(d-1-1) edge node{$[ \varphi ]$} (d-1-2)
(d-1-2) edge node{$[ i_{\varphi} ]$} (d-1-3)
(d-1-3) edge node{$[ \pi_{\varphi} ]$} (d-1-4);
\end{tikzpicture}
\end{center}
in $\HFact_d ( \C,S, \eta )$ a \emph{standard triangle}.

\begin{remark}
\label{rem:isomappingcones}
Suppose that $\varphi$ and $\varphi'$ are homotopic morphisms, with a homotopy $(s_1, \dots, s_d)$ as earlier in this section. Then the mapping cones $C_{\varphi}$ and $C_{\varphi'}$ are isomorphic in $\HFact_d ( \C,S, \eta )$, with an isomorphism $\lambda \colon C_{\varphi} \to C_{\varphi'}$ being given by $\lambda = \left ( \left [ \begin{smallmatrix} 1 & 0 \\ s_1 & 1 \end{smallmatrix} \right ], \dots,  \left [ \begin{smallmatrix} 1 & 0 \\ s_d & 1 \end{smallmatrix} \right ] \right )$. Its inverse is obtained by replacing the $s_i$ in the matrices by $-s_i$. Moreover, the diagram
\begin{center}
\begin{tikzpicture}
\diagram{d}{3em}{4em}{
(M_i,f_i) & (N_i,g_i) & C_{\varphi} & \Sigma (M_i,f_i) \\
(M_i,f_i) & (N_i,g_i) & C_{\varphi'} & \Sigma (M_i,f_i) \\
 };
\path[->, font = \scriptsize, auto]
% upper horizontal maps
(d-1-1) edge node{$[ \varphi ]$} (d-1-2)
(d-1-2) edge node{$[ i_{\varphi} ]$} (d-1-3)
(d-1-3) edge node{$[ \pi_{\varphi} ]$} (d-1-4)
% lower horizontal maps
(d-2-1) edge node{$[ \varphi' ]$} (d-2-2)
(d-2-2) edge node{$[ i_{\varphi'} ]$} (d-2-3)
(d-2-3) edge node{$[ \pi_{\varphi'} ]$} (d-2-4)
% vertical maps
(d-1-1) edge node{$1$} (d-2-1)
(d-1-2) edge node{$1$} (d-2-2)
(d-1-3) edge node{$[ \lambda ]$} (d-2-3)
(d-1-4) edge node{$1$} (d-2-4);
\end{tikzpicture}
\end{center}
commutes in $\HFact_d ( \C,S, \eta )$; in fact, the equalities $i_{\varphi'} = \lambda \circ i_{\varphi}$ and $\pi_{\varphi'} \circ \lambda = \pi_{\varphi}$ hold already in $\Fact_d ( \C,S, \eta )$. Thus, up to isomorphism of triangles in $\HFact_d ( \C,S, \eta )$, a standard triangle is independent of the representative chosen for its base morphism $[ \varphi ]$.
\end{remark}

The following result shows that the homotopy category $\HFact_d ( \C,S, \eta )$, together with its suspension $\Sigma$ and the collection of triangles that are isomorphic to standard triangles, is a triangulated category. For the proof, one could adapt the classical proof for the homotopy category of complexes over an additive category, with some modifications. However, we shall instead prove that $\HFact_d ( \C,S, \eta )$ is an \emph{algebraic} triangulated category. Recall that a triangulated category is algebraic if it can be characterized -- up to equivalence of triangulated categories --  in one of the following equivalent ways:
\begin{itemize}
\item[(1)] as the stable category of some Frobenius exact category;
\item[(2)] as a full triangulated subcategory of the homotopy category of some additive category;
\item[(3)] as the (zeroth) cohomology category of some pretriangulated differential graded category.
\end{itemize}
For the equivalence of these three characterizations of a triangulated category, we refer to \cite{BondalKapranov, Keller, Krause, Schwede}. We shall use the approach via differential graded (DG) categories, by constructing a DG enhancement of $\HFact_d ( \C,S, \eta )$.

\begin{theorem}\label{thm:main}
Let $( \C, S )$ be a suspended additive category, and $\eta \colon 1_{\C} \to S$ a natural transformation with the property that $\eta_{SM} = S \eta_M$ for every object $M$ of $\C$. Furthermore, let $d \ge 2$ be an even integer, $\HFact_d ( \C,S, \eta )$ the homotopy category of $d$-fold $( \C, S )$-factorizations of $\eta$, and $\Sigma \colon \HFact_d ( \C,S, \eta ) \to \HFact_d ( \C,S, \eta )$ the suspension constructed above. Finally, let $\Delta$ be the collection of all triangles in $\HFact_d ( \C,S, \eta )$ isomorphic to a standard triangle. Then 
$$\left ( \HFact_d \left ( \C,S, \eta \right ), \Sigma, \Delta \right )$$
is an algebraic triangulated category.
\end{theorem}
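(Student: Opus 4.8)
The plan is to construct a DG enhancement of $\HFact_d(\C,S,\eta)$ and then invoke characterization (3) above. Concretely, I would build a DG category $\mathcal{A}$ together with an isomorphism of additive categories $H^0(\mathcal{A})\cong\HFact_d(\C,S,\eta)$ under which the cohomological shift corresponds to $\Sigma$; after verifying that $\mathcal{A}$ is pretriangulated and that its DG mapping cones correspond to the mapping cones $C_\varphi$ from the excerpt, the theorem follows from the standard fact (in the references cited above) that $H^0$ of a pretriangulated DG category is a triangulated category whose distinguished triangles are exactly those isomorphic to cone triangles, and that every such triangulated category is algebraic.

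For the DG category I would take as objects the $d$-fold $(\C,S)$-factorizations of $\eta$, extended to $\mathbb{Z}$-indexed periodic sequences by $M_{i+d}=SM_i$ and $f_{i+d}=Sf_i$. For two objects $(M_i,f_i)$, $(N_i,g_i)$, let $\Hom_{\mathcal{A}}$ in degree $n$ be the group of families $(\phi_i)_{i\in\mathbb{Z}}$ with $\phi_i\in\Hom_{\C}(M_i,N_{i+n})$ and $\phi_{i+d}=S\phi_i$, subject moreover to the requirement that $\phi$ commute with the \emph{squares} of the structure maps, i.e.\ $g_{i+n+1}\circ g_{i+n}\circ\phi_i=\phi_{i+2}\circ f_{i+1}\circ f_i$ for all $i$; let the differential be $\partial(\phi)_i=g_{i+n}\circ\phi_i-(-1)^n\phi_{i+1}\circ f_i$ and composition the evident composition of families. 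I would first check that this really is a DG category: that composition and $\partial$ preserve the ``commutes with squares'' condition, that the Leibniz rule holds, and that $\partial^2=0$. A short computation gives $\partial^2(\phi)_i=g_{i+n+1}\circ g_{i+n}\circ\phi_i-\phi_{i+2}\circ f_{i+1}\circ f_i$, which vanishes precisely because of the imposed condition. When $d=2$ this condition is automatic: the two squares are then $\eta_{N_{i+n}}$ and $\eta_{M_i}$, so it is nothing but the naturality square of $\eta$ applied to $\phi_i$; thus for $d=2$ the morphisms of $\mathcal{A}$ are simply all families of maps, with the usual differential.

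Next I would identify $H^0(\mathcal{A})$. The degree-$0$ cocycles are exactly the morphisms of $(\C,S)$-factorizations, since $\partial\phi=0$ unwinds to $g_i\circ\phi_i=\phi_{i+1}\circ f_i$; and the degree-$0$ coboundaries are exactly the null-homotopic morphisms, because any degree $(-1)$ element $s$ of $\Hom_{\mathcal{A}}$ automatically commutes with the squares of the structure maps --- this is exactly Remark~\ref{rem:homotopy} --- and $\partial(s)$ is, up to sign, the null-homotopy formula of the excerpt, so that $B^0$ is precisely the set of morphisms homotopic to zero. Hence $H^0(\mathcal{A})\cong\HFact_d(\C,S,\eta)$ as additive categories. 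I would then check that $\Sigma$ realizes the cohomological shift, i.e.\ that $\Hom_{\mathcal{A}}(Y,\Sigma X)\cong\Hom_{\mathcal{A}}(Y,X)[1]$ naturally --- the signs in the definition of $\Sigma$ in the excerpt are precisely those forced by the Koszul rule --- so that $\mathcal{A}$ is closed under shifts; and that for a closed degree-$0$ morphism $\varphi\colon X\to Y$ the DG mapping cone, with underlying graded object $\Sigma X\oplus Y$ and the twisted differential, is exactly the factorization $C_\varphi$, with structure maps $i_\varphi$ and $\pi_\varphi$ the canonical inclusion and projection. Thus $\mathcal{A}$ is pretriangulated, the distinguished triangles of $H^0(\mathcal{A})$ coming from its pretriangulated structure are exactly the standard triangles up to isomorphism, i.e.\ they form $\Delta$, and the conclusion follows.

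The step I expect to be the real obstacle, and the reason for the ``commutes with squares'' restriction, is making $\mathcal{A}$ into an honest DG category when $d>2$: with all families of morphisms as Hom-complexes one has $\partial^2\neq 0$, since the relevant two-fold composites of the structure maps are not $\eta$ and hence not natural. Remark~\ref{rem:homotopy} is exactly what guarantees that passing to families that commute with squares discards neither morphisms of factorizations nor homotopies, so that $H^0$ remains the correct category; and Remark~\ref{rem:isomappingcones} is what one uses to see that the triangle attached to $\varphi$ depends, up to isomorphism of triangles, only on $[\varphi]$ --- a point needed when matching $\Delta$ with the DG cone triangles. The remaining verifications --- bilinearity and associativity of composition, the Leibniz rule, and compatibility of the shift and cone constructions with all sign conventions --- are routine bookkeeping.
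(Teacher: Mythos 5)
Your proposal is correct and takes essentially the same approach as the paper's proof: both construct a DG enhancement whose degree-$n$ Hom-groups consist of (periodic) families of morphisms $M_i\to N_{i+n}$ subject to the commuting-with-squares condition, with the same differential, observe that $\partial^2=0$ is exactly that condition, identify $H^0$ with $\HFact_d(\C,S,\eta)$ via Remark~\ref{rem:homotopy}, and verify representability of the shift and mapping cone to conclude the DG category is pretriangulated. The only superficial difference is bookkeeping ($\mathbb{Z}$-indexed $S$-periodic families versus $d$-tuples with indices read modulo $d$, with $N_{m}=S^qN_r$).
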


\begin{proof}
We construct a DG category $\D$ as follows. The objects are the same as in $\Fact_d ( \C,S, \eta )$, that is, all the $d$-fold $( \C, S )$-factorizations of $\eta$. For two objects $M = (M_i,f_i)$ and $N = (N_i,g_i)$, and an integer $n \in \mathbb{Z}$, we define $\Hom_{\D}^n(M,N)$ as the set of all $d$-tuples $\varphi = ( \varphi_1, \dots, \varphi_d )$ of morphisms $\varphi_i \colon M_i \to N_{i+n}$, with the property that each \emph{double square} in the diagram
\begin{center}
\begin{tikzpicture}
\diagram{d}{3em}{4em}{
M_1 & M_2 & M_3 & \cdots & M_d & SM_1 \\
N_{1+n} & N_{2+n} & N_{3+n} & \cdots & N_{d+n} & SN_{1+n} \\
 };
\path[->, font = \scriptsize, auto]
% upper horizontal maps
(d-1-1) edge node{$f_1$} (d-1-2)
(d-1-2) edge node{$f_2$} (d-1-3)
(d-1-3) edge node{$f_3$} (d-1-4)
(d-1-4) edge node{$f_{d-1}$} (d-1-5)
(d-1-5) edge node{$f_d$} (d-1-6)
% lower horizontal maps
(d-2-1) edge node{$g_{1+n}$} (d-2-2)
(d-2-2) edge node{$g_{2+n}$} (d-2-3)
(d-2-3) edge node{$g_{3+n}$} (d-2-4)
(d-2-4) edge node{$g_{d-1+n}$} (d-2-5)
(d-2-5) edge node{$g_{d+n}$} (d-2-6)
% vertical maps
(d-1-1) edge node{$\varphi_1$} (d-2-1)
(d-1-2) edge node{$\varphi_2$} (d-2-2)
(d-1-3) edge node{$\varphi_3$} (d-2-3)
(d-1-5) edge node{$\varphi_d$} (d-2-5)
(d-1-6) edge node{$S \varphi_1$} (d-2-6);
\end{tikzpicture}
\end{center}
commutes: 
$$\varphi_{i+2} \circ f_{i+1} \circ f_i = g_{i+1+n} \circ g_{i+n} \circ \varphi_i$$
The indices are here taken modulo $d$, in the sense that if $m = qd + r$ with $1 \le r \le d$, then $N_m = S^qN_r$ and $g_m = S^qg_r$.

We now turn the graded $\mathbb{Z}$-module 
$$\Hom_{\D}^*(M,N) = \bigoplus_{n \in \mathbb{Z}} \Hom_{\D}^n(M,N)$$
into a DG $\mathbb{Z}$-module in the usual way: for the element $\varphi \in \Hom_{\D}^n(M,N)$ above we define
$$\partial ( \varphi ) = g \circ \varphi + (-1)^{n+1} \varphi \circ f$$
Thus $\partial ( \varphi )$ is the element in $\Hom_{\D}^{n+1}(M,N)$ displayed in the diagram
\begin{center}
\begin{tikzpicture}
\diagram{d}{3em}{4em}{
M_1 & M_2 & M_3 & \cdots & M_d & SM_1 \\
N_{2+n} & N_{3+n} & N_{4+n} & \cdots & SN_{1+n} & SN_{2+n} \\
 };
\path[->, font = \scriptsize, auto]
% upper horizontal maps
(d-1-1) edge node{$f_1$} (d-1-2)
(d-1-2) edge node{$f_2$} (d-1-3)
(d-1-3) edge node{$f_3$} (d-1-4)
(d-1-4) edge node{$f_{d-1}$} (d-1-5)
(d-1-5) edge node{$f_d$} (d-1-6)
% lower horizontal maps
(d-2-1) edge node{$g_{2+n}$} (d-2-2)
(d-2-2) edge node{$g_{3+n}$} (d-2-3)
(d-2-3) edge node{$g_{4+n}$} (d-2-4)
(d-2-4) edge node{$g_{d+n}$} (d-2-5)
(d-2-5) edge node{$Sg_{1+n}$} (d-2-6)
% vertical maps
(d-1-1) edge node{$\partial ( \varphi_1 )$} (d-2-1)
(d-1-2) edge node{$\partial ( \varphi_2 )$} (d-2-2)
(d-1-3) edge node{$\partial ( \varphi_3 )$} (d-2-3)
(d-1-5) edge node{$\partial ( \varphi_d )$} (d-2-5)
(d-1-6) edge node{$\partial ( S \varphi_1 )$} (d-2-6);
\end{tikzpicture}
\end{center}
with $\partial ( \varphi_i ) = g_{i+n} \circ \varphi_i + (-1)^{n+1} \varphi_{i+1} \circ f_i$. The requirement that each double square in the $\varphi$-diagram commutes now gives
\begin{eqnarray*}
\partial^2 ( \varphi ) & = & \partial \left ( g \circ \varphi + (-1)^{n+1} \varphi \circ f \right ) \\
& = & g \circ \left ( g \circ \varphi + (-1)^{n+1} \varphi \circ f \right ) + (-1)^{n+2} \left ( g \circ \varphi + (-1)^{n+1} \varphi \circ f \right ) \circ f \\
& = & g \circ g \circ \varphi - \varphi \circ f \circ f \\
& = & 0
\end{eqnarray*}
showing that $\left ( \Hom_{\D}^*(M,N), \partial \right )$ is a cochain complex, that is, a DG $\mathbb{Z}$-module.

Compositions of morphisms in $\D$ are morphisms
\begin{center}
\begin{tikzpicture}
\diagram{d}{3em}{4em}{
\Hom_{\D}^*(N,L) \otimes_{\mathbb{Z}} \Hom_{\D}^*(M,N) & \Hom_{\D}^*(M,L) \\
 };
\path[->, font = \scriptsize, auto]
(d-1-1) edge (d-1-2);
\end{tikzpicture}
\end{center}
of complexes. Furthermore, for each object $M \in \D$, the identity morphism $1_M \in \Hom_{\D}^0(M,M)$ satisfies $\partial ( 1_M ) =0$. Thus $\D$ is a DG category; we must show that it is pretriangulated.

As above, consider two objects $M = (M_i,f_i)$ and $N = (N_i,g_i)$ in $\D$. For an integer $t \in \mathbb{Z}$, the shifted DG $\mathbb{Z}$-module $\Hom_{\D}^*(M,N)[t]$ is the cochain complex with $\Hom_{\D}^{n+t}(M,N)$ in degree $n$, and with differential $(-1)^t \partial$. The right $\D$-module $\Hom_{\D}^*(-,N)[t]$ -- that is, the contravariant DG functor
\begin{center}
\begin{tikzpicture}
\diagram{d}{3em}{4em}{
M & \Hom_{\D}^*(M,N)[t] \\
 };
\path[->, font = \scriptsize, auto]
(d-1-1) edge[|->] (d-1-2);
\end{tikzpicture}
\end{center}
from $\D$ to the category of DG $\mathbb{Z}$-modules -- is representable: one checks that it is isomorphic to the right $\D$-module $\Hom_{\D}^*(-, \Sigma^tN)$, where $\Sigma$ is the suspension in $\Fact_d ( \C,S, \eta )$. Next, let $\varphi \in Z^0 \left ( \Hom_{\D}^*(M,N), \partial \right )$, that is, $\varphi \in \Hom_{\D}^0(M,N)$ and $0 = \partial ( \varphi )= g \circ \varphi - \varphi \circ f$. Then each square in the diagram
\begin{center}
\begin{tikzpicture}
\diagram{d}{3em}{4em}{
M_1 & M_2 & M_3 & \cdots & M_d & SM_1 \\
N_{1} & N_{2} & N_{3} & \cdots & N_{d} & SN_{1} \\
 };
\path[->, font = \scriptsize, auto]
% upper horizontal maps
(d-1-1) edge node{$f_1$} (d-1-2)
(d-1-2) edge node{$f_2$} (d-1-3)
(d-1-3) edge node{$f_3$} (d-1-4)
(d-1-4) edge node{$f_{d-1}$} (d-1-5)
(d-1-5) edge node{$f_d$} (d-1-6)
% lower horizontal maps
(d-2-1) edge node{$g_{1}$} (d-2-2)
(d-2-2) edge node{$g_{2}$} (d-2-3)
(d-2-3) edge node{$g_{3}$} (d-2-4)
(d-2-4) edge node{$g_{d-1}$} (d-2-5)
(d-2-5) edge node{$g_{d+n}$} (d-2-6)
% vertical maps
(d-1-1) edge node{$\varphi_1$} (d-2-1)
(d-1-2) edge node{$\varphi_2$} (d-2-2)
(d-1-3) edge node{$\varphi_3$} (d-2-3)
(d-1-5) edge node{$\varphi_d$} (d-2-5)
(d-1-6) edge node{$S \varphi_1$} (d-2-6);
\end{tikzpicture}
\end{center}
commutes, so that $\varphi$ is a morphism in $\Fact_d ( \C,S, \eta )$. It induces a morphism 
\begin{center}
\begin{tikzpicture}
\diagram{d}{3em}{4em}{
\Hom_{\D}^*(-,M) & \Hom_{\D}^*(-,N) \\
 };
\path[->, font = \scriptsize, auto]
(d-1-1) edge node{$\widehat{\varphi}$} (d-1-2);
\end{tikzpicture}
\end{center}
of right $\D$-modules, and the mapping cone $C( \widehat{\varphi} )$ of the latter is the right $\D$-module defined as follows. For an object $L \in \D$, the graded $\mathbb{Z}$-module $C( \widehat{\varphi} )^*(L)$ has
$$\Hom_{\D}^{n+1}(L,M) \oplus \Hom_{\D}^n(L,N)$$
in degree $n$, with differential $C( \widehat{\varphi} )^n(L) \to C( \widehat{\varphi} )^{n+1}(L)$ given by
\begin{center}
\begin{tikzpicture}
\diagram{d}{3em}{4em}{
(\theta, \psi ) & \left ( - \partial ( \theta ), \varphi \circ \theta + \partial ( \psi ) \right ) \\
 };
\path[->, font = \scriptsize, auto]
(d-1-1) edge[|->] (d-1-2);
\end{tikzpicture}
\end{center}
This turns $C( \widehat{\varphi} )^*(L)$ into a cochain complex, and one now checks that $C( \widehat{\varphi} )$ is representable: it is isomorphic to the right $\D$-module $\Hom_{\D}^*(-, C_{\varphi} )$, where $C_{\varphi}$ is the mapping cone of $\varphi$ in $\Fact_d ( \C,S, \eta )$. Consequently, the DG category $\D$ is pretriangulated.

Since $\D$ is pretriangulated, its zeroth cohomology $\Ho^0 ( \D )$ is triangulated. Moreover, as for the homotopy category of complexes over an additive category, it now follows from the definition of $\Ho^0 ( \D )$ that it is equivalent, as a triangulated category, to $\left ( \HFact_d ( \C,S, \eta ), \Sigma, \Delta \right )$; for details, we refer to \cite[Section 2]{Schwede}, keeping in mind that we use cohomological notation. Namely, for all objects $M,N \in \D$ and morphisms $\varphi \in Z^0 \left ( \Hom_{\D}^*(M,N), \partial \right )$ -- in other words, morphisms in $\Fact_d ( \C,S, \eta )$ -- the above representability properties in $\D$ of the mapping cone $C_{\varphi}$ provide universal morphisms $i_{\varphi} \colon N \to C_{\varphi}$ and $\pi_{\varphi} \colon C_{\varphi} \to \Sigma M$ in $Z^0 \left ( \Hom_{\D}^*(M,N), \partial \right )$, and these are precisely the morphisms in $\Fact_d ( \C,S, \eta )$ we denoted the same way. The distinguished triangles in $\Ho^0 ( \D )$ are now the ones that are isomorphic to the images of the triangles in $Z^0 \left ( \Hom_{\D}^*(M,N), \partial \right )$ of the form 
\begin{center}
\begin{tikzpicture}
\diagram{d}{3em}{4em}{
M & N & C_{\varphi} & \Sigma M \\
 };
\path[->, font = \scriptsize, auto]
(d-1-1) edge node{$\varphi$} (d-1-2)
(d-1-2) edge node{$i_{\varphi}$} (d-1-3)
(d-1-3) edge node{$\pi_{\varphi}$} (d-1-4);
\end{tikzpicture}
\end{center}
However, the morphisms in $\Ho^0 ( \D )$ are precisely the homotopy equivalence classes of morphisms in $\Fact_d ( \C,S, \eta )$: a morphism $\theta \colon M \to N$ in $Z^0 \left ( \Hom_{\D}^*(M,N), \partial \right )$ is null-homotopic if and only if it is the image under the differential $\partial$ on $\Hom_{\D}^*(M,N)$ of an element $s \in \Hom_{\D}^{-1}(M,N)$. This follows from the definition of $\Hom_{\D}^{-1}(M,N)$, together with Remark \ref{rem:homotopy}. Consequently, the distinguished triangles in $\Ho^0 ( \D )$ are those that are isomorphic to triangles of the form 
\begin{center}
\begin{tikzpicture}
\diagram{d}{3em}{4em}{
M & N & C_{\varphi} & \Sigma M \\
 };
\path[->, font = \scriptsize, auto]
(d-1-1) edge node{$[ \varphi ]$} (d-1-2)
(d-1-2) edge node{$[ i_{\varphi} ]$} (d-1-3)
(d-1-3) edge node{$[ \pi_{\varphi} ]$} (d-1-4);
\end{tikzpicture}
\end{center}
This shows that the category $\D$ is a DG enhancement of $\HFact_d ( \C,S, \eta )$.
\end{proof}

Let us now look at a few examples.

\begin{example}\label{ex:periodiccomplexes}
When the natural transformation $\eta \colon 1_{\C} \to S$ is the zero transformation, then we obtain the homotopy category $\HFact_d ( \C,S, 0 )$. The objects are sequences
\begin{center}
\begin{tikzpicture}
\diagram{d}{3em}{4em}{
M_1 & M_2 & \cdots & M_d & SM_1 \\
 };
\path[->, font = \scriptsize, auto]
(d-1-1) edge node{$f_1$} (d-1-2)
(d-1-2) edge node{$f_2$} (d-1-3)
(d-1-3) edge node{$f_{d-1}$} (d-1-4)
(d-1-4) edge node{$f_d$} (d-1-5);
\end{tikzpicture}
\end{center}
in $\C$, in which all the $d$-fold compositions
$$f_d \circ f_{d-1} \circ \cdots \circ f_1, \hspace{3mm} Sf_1 \circ f_d \circ \cdots \circ f_2, \hspace{3mm} \dots, \hspace{3mm}
Sf_{d-1} \circ \cdots \circ Sf_1 \circ f_d$$
are zero. 

The triangulated category $\left ( \HFact_d ( \C,S, 0 ), \Sigma, \Delta \right )$ is equivalent to the homotopy category of $S$-periodic $d$-complexes. The objects of the latter are all sequences
\begin{center}
\begin{tikzpicture}
\diagram{d}{3em}{2.8em}{
\cdots & S^{-1}M_d & M_1 & M_2 & \cdots & M_d & SM_1 & \cdots \\
 };
\path[->, font = \scriptsize, auto]
(d-1-1) edge node{$S^{-1}f_{d-1}$} (d-1-2)
(d-1-2) edge node{$S^{-1}f_d$} (d-1-3)
(d-1-3) edge node{$f_1$} (d-1-4)
(d-1-4) edge node{$f_2$} (d-1-5)
(d-1-5) edge node{$f_{d-1}$} (d-1-6)
(d-1-6) edge node{$f_d$} (d-1-7)
(d-1-7) edge node{$Sf_1$} (d-1-8);
\end{tikzpicture}
\end{center}
in which the composition of $d$ consecutive maps is zero, and where also the homotopies are $d$-$S$-periodic. The suspension is the (left) shift with a sign change on the maps, and the distinguished triangles are defined in terms of mapping cones. In particular, when $d = 2$, we obtain the homotopy category of complexes of the form
\begin{center}
\begin{tikzpicture}
\diagram{d}{3em}{4em}{
\cdots & S^{-1}M_2 & M_1 & M_2 & SM_1 & \cdots \\
 };
\path[->, font = \scriptsize, auto]
(d-1-1) edge node{$S^{-1}f_1$} (d-1-2)
(d-1-2) edge node{$S^{-1}f_2$} (d-1-3)
(d-1-3) edge node{$f_1$} (d-1-4)
(d-1-4) edge node{$f_2$} (d-1-5)
(d-1-5) edge node{$Sf_1$} (d-1-6);
\end{tikzpicture}
\end{center}
with $2$-$S$-periodic homotopies. Specializing further, by taking $S = 1_{\C}$, we obtain the homotopy category of $2$-periodic complexes 
\begin{center}
\begin{tikzpicture}
\diagram{d}{3em}{4em}{
\cdots & M_1 & M_2 & M_1 & \cdots \\
 };
\path[->, font = \scriptsize, auto]
(d-1-1) edge node{$f_2$} (d-1-2)
(d-1-2) edge node{$f_1$} (d-1-3)
(d-1-3) edge node{$f_2$} (d-1-4)
(d-1-4) edge node{$f_1$} (d-1-5);
\end{tikzpicture}
\end{center}
and $2$-periodic homotopies; it is equivalent to $\left ( \HFact_2 ( \C,1_{\C}, 0 ), \Sigma, \Delta \right )$. 

Generalized complexes as above were introduced by Kapranov in \cite{Kapranov}, using the terminology \emph{$N$-complexes}. Homotopy categories of such complexes were then studied in \cite{GillespieHovey} and \cite{IyamaKatoMiyachi}, using a modified version of homotopies.
\end{example}

\begin{example}\label{ex:matrixfactorizations}
The origin of the topic of this paper is the theory of matrix factorizations for commutative rings, introduced by Eisenbud in \cite{Eisenbud}. Thus let $R$ be a commutative ring, and $x$ an element of $R$. Take as $\C$ the category $\Proj (R)$ of finitely generated projective $R$-modules, and as suspension $S$ the identity automorphism $1_{\Proj (R)}$. Finally, as the natural transformation $1_{\Proj (R)} \to 1_{\Proj (R)}$ we take the multiplication transformation $\eta_x$ induced by $x$, and set $d=2$. Then the objects of the homotopy category $\HFact_2 ( \Proj (R), 1_{\Proj (R)}, \eta_x )$ are diagrams
\begin{center}
\begin{tikzpicture}
\diagram{d}{3em}{4em}{
P & Q & P \\
 };
\path[->, font = \scriptsize, auto]
(d-1-1) edge node{$f_1$} (d-1-2)
(d-1-2) edge node{$f_2$} (d-1-3);
\end{tikzpicture}
\end{center}
in which $P$ and $Q$ are finitely generated projective $R$-modules, and where the maps satisfy the equalities $f_2 \circ f_1 = x \cdot 1_P$ and $f_1 \circ f_2 = x \cdot 1_Q$. If we specialize further by taking $\C$ to be the category $\Free (R)$ of finitely generated \emph{free} $R$-modules, then by choosing bases for the modules the maps are given in terms of square matrices which factorize the diagonal matrix for $x$.

Suppose that the element $x$ is a non-zerodivisor in $R$, and denote the quotient $R/(x)$ by $Q$. When we reduce a matrix factorization modulo $x$ and extend in both directions, we obtain a $2$-periodic acyclic complex of finitely generated projective $Q$-modules. As shown in \cite{BerghJorgensen}, this assignment induces a fully faithful triangle functor from $\left ( \HFact_2 ( \Proj (R), 1_{\Proj (R)}, \eta_x ), \Sigma, \Delta \right )$ to the homotopy category of totally acyclic complexes of finitely generated projective $Q$-modules. When $R$ is a regular local ring, then the latter is equivalent to the singularity category of the hypersurface ring $Q$, that is, the Verdier quotient of the bounded derived category of $Q$-modules by the perfect complexes (and also equivalent to the stable category of maximal Cohen-Macaulay $Q$-modules). In this case, reduction modulo $x$ actually induces an \emph{equivalence} between $\left ( \HFact_2 ( \Proj (R), 1_{\Proj (R)}, \eta_x ), \Sigma, \Delta \right )$ and the singularity category of $Q$; this was observed by Buchweitz in \cite{Buchweitz}, and proved explicitly by Orlov in \cite{Orlov}. In recent years, there have been numerous generalizations of this result, in many different directions. Also, matrix factorizations with more factors were recently introduced in \cite{Tribone}, including homotopy categories of such, using the modified version of homotopies mentioned at the end of Example \ref{ex:periodiccomplexes}.
\end{example}

\begin{example}\label{ex:twistedmatrixfactorizations}
Let now $B$ be a possibly noncommutative ring, $w \in B$ a central element, and denote the quotient $B/(w)$ by $A$. Furthermore, suppose that $\nu \colon B \to B$ is an automorphism satisfying $\nu (wr) = wr$ for all $r \in B$. In \cite{BerghErdmann}, certain twisted matrix factorizations over $B$ were introduced (see also  \cite{CassidyEtAl} for a related graded concept), and these give rise to complexes of free modules over $A$, as in the classical case.

Namely, denote the category of finitely generated free left $B$-modules by $\Free (B)$. Given such a module $F$, its twist ${_{\nu}F}$ is again free, and the assignment $F \mapsto {_{\nu}F}$ induces an additive automorphism $S_{\nu}$ on $\Free (B)$. Furthermore, as explained in \cite[Section 3]{BerghErdmann}, the map $\eta_F \colon F \to S_{\nu}F$ given by $m \mapsto wm$ is $B$-linear, and the collection $\eta_w = \left \{ \eta_F \mid F \in \Free (B) \right \}$ forms a natural transformation $1_{\Free (B)} \to S_{\nu}$ with $\eta_{S_{\nu}F} = S_{\nu} \eta_F$ for all $F$. We may therefore form the homotopy category $\HFact_2 ( \Free(B), S_{\nu}, \eta_w )$, whose objects are diagrams
\begin{center}
\begin{tikzpicture}
\diagram{d}{3em}{4em}{
F & G & S_{\nu}F \\
 };
\path[->, font = \scriptsize, auto]
(d-1-1) edge node{$f_1$} (d-1-2)
(d-1-2) edge node{$f_2$} (d-1-3);
\end{tikzpicture}
\end{center}
of free $B$-modules, and where the maps satisfy the equalities $f_2 \circ f_1 = \eta_F$ and $f_1 \circ f_2 = \eta_G$ (in the latter equality, the map $f_1$ should strictly speaking be $S_{\nu}f_1$, but the automorphism $S_{\nu}$ is the identity on maps). By \cite[Proposition 3.2]{BerghErdmann}, reduction modulo $w$ induces a triangle functor from $\left ( \HFact_2 ( \Free(B), S_{\nu}, \eta_w ), \Sigma, \Delta \right )$ to the homotopy category of complexes of finitely generated free left $A$-modules. The image of the factorization displayed above is the complex
\begin{center}
\begin{tikzpicture}
\diagram{d}{3em}{2.5em}{
\cdots & {_{\nu^{n-1}}(G/wG)} & {_{\nu^n}(F/wF)} & {_{\nu^n}(G/wG)} & {_{\nu^{n+1}}(F/wF)} & \cdots \\
 };
\path[->, font = \scriptsize, auto]
(d-1-1) edge (d-1-2)
(d-1-2) edge node{$\overline{f_2}$} (d-1-3)
(d-1-3) edge node{$\overline{f_1}$} (d-1-4)
(d-1-4) edge node{$\overline{f_2}$} (d-1-5)
(d-1-5) edge (d-1-6);
\end{tikzpicture}
\end{center}
with the free $A$-module $F/wF$ in degree zero. Note that since the automorphism $\nu$ satisfies $\nu (wr) = wr$ for all $r \in B$, it induces an automorphism on the quotient ring $A$; we have denoted this by $\nu$ as well.

A specific example was studied in detail in \cite[Section 4]{BerghErdmann}. Let $k$ be a field, $B$ the $k$-algebra
$$k \langle x,y \rangle / ( x^2, y^2, xyx, yxy )$$
and take $w = xy-qyx$ for some nonzero $q \in k$. Furthermore, consider the automorphism $\nu$ on $B$ defined by $x \mapsto -q^{-1}x$ and $y \mapsto -qy$; it trivially satisfies $\nu (wr) = wr$ for all $r \in B$. Then by \cite[Theorem 4.2]{BerghErdmann}, reduction modulo $w$ induces a triangle functor from $\left ( \HFact_2 ( \Free(B), S_{\nu}, \eta_w ), \Sigma, \Delta \right )$ to the homotopy category of \emph{acyclic} complexes of finitely generated free $A$-modules. Consequently, even though the element $w$ is far from being a regular element in $B$, this result is analogous to the classical case in Example \ref{ex:matrixfactorizations}.

The algebra $A$ is the four-dimensional quantum complete intersection
$$k \langle x,y \rangle / ( x^2, xy-qyx, y^2 )$$
It is local and selfinjective, and so the homotopy category of acyclic complexes of finitely generated free left $A$-modules is therefore equivalent to the stable module category $\stmod A$ of finitely generated left $A$-modules. Thus reduction modulo $w$ induces a triangle functor
\begin{center}
\begin{tikzpicture}
\diagram{d}{3em}{4em}{
\left ( \HFact_2 \left ( \Free(B), S_{\nu}, \eta_w \right ), \Sigma, \Delta \right ) & \stmod A \\
 };
\path[->, font = \scriptsize, auto]
(d-1-1) edge (d-1-2);
\end{tikzpicture}
\end{center}
From the construction of the functor, we see that the image is contained in the thick subcategory $\left ( \stmod A \right )_{\cx \le 1}$, formed by the $A$-modules of complexity at most one, that is, the modules for which there are bounds on the dimensions of the terms in the minimal projective resolutions. As shown in \cite[Theorem 4.6]{BerghErdmann}, when the field $k$ is algebraically closed, then the triangle functor
\begin{center}
\begin{tikzpicture}
\diagram{d}{3em}{4em}{
\left ( \HFact_2 \left ( \Free(B), S_{\nu}, \eta_w \right ), \Sigma, \Delta \right ) & \left ( \stmod A \right )_{\cx \le 1} \\
 };
\path[->, font = \scriptsize, auto]
(d-1-1) edge (d-1-2);
\end{tikzpicture}
\end{center}
is almost essentially surjective (i.e.\ dense); namely, all the indecomposable modules of $\left ( \stmod A \right )_{\cx \le 1}$ (and there are infinitely many of them) except two special ones lie in the image of the functor.
\end{example}

\begin{example}\label{ex:BallardEtAl}
When the category $\C$ is abelian and $d=2$, then the homotopy category $\HFact_2 ( \C,S, \eta )$ coincides with the homotopy factorization category introduced by Ballard et al.\ in \cite{BallardEtAl}. They introduce resolutions of such factorizations, and use these to obtain a number of interesting results. For example, given a smooth projective hypersurface, they establish a derived equivalence to the homotopy category of matrix factorizations over a certain noncommutative algebra.
\end{example}

\section{Totally acyclic complexes}
\label{sec:tac}

In this section we explore some triangulated functors from the homotopy category of factorizations to homotopy categories over rings. We keep the same notation as in Section \ref{sec:main}: let $( \C, S )$ be a suspended additive category, and $\eta \colon 1_{\C} \to S$ a natural transformation commuting with $S$, in the sense that $\eta_{SM} = S \eta_M$ for every object $M$ of $\C$. 

Fix an object $M \in \C$. From every object $(M_i,f_i) \in \Fact_d ( \C,S,\eta )$, that is, every $d$-fold $( \C, S )$-factorization
\begin{center}
\begin{tikzpicture}
\diagram{d}{3em}{4em}{
M_1 & M_2 & \cdots & M_d & SM_1 \\
 };
\path[->, font = \scriptsize, auto]
(d-1-1) edge node{$f_1$} (d-1-2)
(d-1-2) edge node{$f_2$} (d-1-3)
(d-1-3) edge node{$f_{d-1}$} (d-1-4)
(d-1-4) edge node{$f_d$} (d-1-5);
\end{tikzpicture}
\end{center}
of $\eta$, we obtain a (doubly infinite) sequence
\begin{center}
\begin{tikzpicture}
\diagram{d}{3em}{3em}{
\cdots & \Hom_{\C} \left ( M,M_1 \right ) & \cdots & \Hom_{\C} \left ( M,M_d \right ) & \cdots \\
 };
\path[->, font = \scriptsize, auto]
(d-1-1) edge node{$(S^{-1}f_d)_*$} (d-1-2)
(d-1-2) edge node{$(f_1)_*$} (d-1-3)
(d-1-3) edge node{$(f_{d-1})_*$} (d-1-4)
(d-1-4) edge node{$(f_d)_*$} (d-1-5);
\end{tikzpicture}
\end{center}
of abelian groups. This is a sequence of right $\Hom_{\C}(M,M)$-modules. 

Now let $\A$ be any additive category, and consider the homotopy category $\HSeq \A$ of all sequences over $\A$. Thus the objects are sequences 
\begin{center}
\begin{tikzpicture}
\diagram{d}{3em}{4em}{
\cdots & A_{n+1} & A_n & A_{n-1} & \cdots \\
 };
\path[->, font = \scriptsize, auto]
(d-1-1) edge (d-1-2)
(d-1-2) edge node{$d_{n+1}$} (d-1-3)
(d-1-3) edge node{$d_n$} (d-1-4)
(d-1-4) edge (d-1-5);
\end{tikzpicture}
\end{center}
of objects and morphisms in $\A$, and the morphisms are equivalence classes of chain maps. Similarly, given a positive even integer $t \ge 2$, we denote by $\K_t \A$ the homotopy category of all $t$-complexes, that is, sequences in which each $t$-fold composition is zero. These are both triangulated categories with the usual shift functor as suspension, and distinguished triangles defined in terms of mapping cones and standard triangles (the reason why we need $t$ to be even for $\K_t \A$ is the same as in Remark \ref{rem:deven}). The standard proof showing that the homotopy category of complexes is triangulated carries over verbatim; one can also replace $\HFact_d ( \C,S,\eta )$ with $\HSeq \A$ or $\K_t \A$ in the proof of Theorem \ref{thm:main}, and so the latter are algebraic triangulated categories. Of course, the category $\K_2 \A$ is just the usual homotopy category $\K \A$ of ordinary complexes. For a ring $R$, we denote by $\HSeq R$ and $\K_t R$ the homotopy categories of sequences and $t$-complexes of right $R$-modules.

\begin{theorem}\label{thm:functors}
Let $( \C, S )$ be a suspended additive category, and $\eta \colon 1_{\C} \to S$ a natural transformation with $\eta_{SM} = S \eta_M$ for every object $M$ of $\C$. Furthermore, let $d \ge 2$ be an even integer, and $\left ( \HFact_d ( \C,S, \eta ), \Sigma, \Delta \right )$ the triangulated homotopy category of factorizations from \emph{Theorem \ref{thm:main}}. Finally, fix an object $M \in \C$, and denote the endomorphism ring $\Hom_{\C}(M,M)$ by $\Gamma_M$.

\emph{(1)} The functor $\Hom_{\C}(M,-)$ induces a triangle functor
\begin{center}
\begin{tikzpicture}
\diagram{d}{3em}{4em}{
\left ( \HFact_d \left ( \C,S, \eta \right ), \Sigma, \Delta \right ) & \HSeq \Gamma_M \\
 };
\path[->, font = \scriptsize, auto]
(d-1-1) edge (d-1-2);
\end{tikzpicture}
\end{center}
It maps a $d$-fold factorization $(M_i,f_i) \in \HFact_d ( \C,S, \eta )$ to the sequence
\begin{center}
\begin{tikzpicture}
\diagram{d}{3em}{3em}{
\cdots & \Hom_{\C} \left ( M,M_1 \right ) & \cdots & \Hom_{\C} \left ( M,M_d \right ) & \cdots \\
 };
\path[->, font = \scriptsize, auto]
(d-1-1) edge node{$(S^{-1}f_d)_*$} (d-1-2)
(d-1-2) edge node{$(f_1)_*$} (d-1-3)
(d-1-3) edge node{$(f_{d-1})_*$} (d-1-4)
(d-1-4) edge node{$(f_d)_*$} (d-1-5);
\end{tikzpicture}
\end{center}
of right $\Gamma_M$-modules, and a homotopy equivalence class of morphisms of $d$-fold factorizations to the corresponding homotopy equivalence class of morphisms of sequences.

\emph{(2)} Suppose that $f$ is a central element of \hspace{.5pt} $\Gamma_M$, and that $\eta_M$ factors through $f$. Then the functor $\Hom_{\C}(M,-) \otimes_{\Gamma_M} \Gamma_M /(f)$ induces a triangle functor
\begin{center}
\begin{tikzpicture}
\diagram{d}{3em}{4em}{
\left ( \HFact_d \left ( \C,S, \eta \right ), \Sigma, \Delta \right ) & \K_d \Gamma_M/(f) \\
 };
\path[->, font = \scriptsize, auto]
(d-1-1) edge (d-1-2);
\end{tikzpicture}
\end{center}
It maps a $d$-fold factorization $(M_i,f_i) \in \HFact_d ( \C,S, \eta )$ to the $d$-complex
\begin{center}
\begin{tikzpicture}
\diagram{d}{3em}{3em}{
\cdots & \frac{\Hom_{\C} \left ( M,M_1 \right )}{\Hom_{\C} \left ( M,M_1 \right ) \cdot f} & \cdots &  \frac{\Hom_{\C} \left ( M,M_d \right )}{\Hom_{\C} \left ( M,M_d \right ) \cdot f} & \cdots \\
 };
\path[->, font = \scriptsize, auto]
(d-1-1) edge node{$\overline{(S^{-1}f_d)_*}$} (d-1-2)
(d-1-2) edge node{$\overline{(f_1)_*}$} (d-1-3)
(d-1-3) edge node{$\overline{(f_{d-1})_*}$} (d-1-4)
(d-1-4) edge node{$\overline{(f_d)_*}$} (d-1-5);
\end{tikzpicture}
\end{center}
of right $\Gamma_M/(f)$-modules, and morphisms as in \emph{(1)}.
\end{theorem}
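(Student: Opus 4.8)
The plan is to build the functor in (1) termwise from $\Hom_{\C}(M,-)$, verify that it descends to the homotopy categories and is exact, and then obtain (2) by post-composing with reduction modulo $f$ and identifying where the image lands.

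\textbf{Part (1).} On objects, send a $d$-fold factorization $(M_i,f_i)$ to the $\mathbb{Z}$-indexed sequence obtained by unrolling $\Hom_{\C}(M,M_1)\xrightarrow{(f_1)_*}\cdots\xrightarrow{(f_d)_*}\Hom_{\C}(M,SM_1)\to\cdots$ periodically, using the indexing convention from the proof of Theorem \ref{thm:main} (so the term at position $m=qd+r$, $1\le r\le d$, is $\Hom_{\C}(M,S^qM_r)$, with differential $(S^qf_r)_*$). Each term is a right $\Gamma_M$-module by precomposition, and each $(S^qf_r)_*$ is $\Gamma_M$-linear because postcomposition commutes with precomposition, so this is an object of $\HSeq\Gamma_M$. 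On morphisms, send $\varphi=(\varphi_i)$ to the chain map with components $(\varphi_i)_*$ unrolled periodically; this is a chain map precisely because $\varphi$ is a morphism of factorizations. Functoriality and additivity are immediate since $\Hom_{\C}(M,-)$ is additive and both $\Sigma$ on $\Fact_d ( \C,S,\eta )$ and direct sums are formed termwise. To see the assignment descends to $\HFact_d ( \C,S,\eta )$, apply $\Hom_{\C}(M,-)$ to the defining equations of a homotopy $(s_1,\dots,s_d)$ between parallel morphisms $\varphi,\varphi'$; after unrolling, the maps $(s_i)_*$ (suitably $S$-twisted and repeated) form a chain homotopy between the two chain maps — the $S^{-1}$-twisted summand in the first equation is exactly the one landing in the correct degree. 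Thus $\Hom_{\C}(M,-)$ induces a functor $\HFact_d ( \C,S,\eta )\to\HSeq\Gamma_M$, and it is a triangle functor: $\Sigma$ on $\Fact_d ( \C,S,\eta )$ shifts a sequence one step and changes the sign of every differential, which is precisely the shift in $\HSeq\Gamma_M$, giving an identification of $\Hom_{\C}(M,\Sigma(-))$ with the shift of $\Hom_{\C}(M,-)$; and since $\Hom_{\C}(M,-)$ carries biproducts to biproducts, it carries $C_{\varphi}$, $i_{\varphi}$, $\pi_{\varphi}$ to the mapping cone and structure maps of the induced chain map, so standard triangles go to standard triangles and every triangle in $\Delta$ to a distinguished one.

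\textbf{Part (2).} Since $f$ is central in $\Gamma_M$, for any right $\Gamma_M$-module $X$ the subgroup $Xf$ is a submodule and $X\mapsto X\otimes_{\Gamma_M}\Gamma_M/(f)=X/Xf$ is an additive functor to right $\Gamma_M/(f)$-modules; applied termwise it induces a triangle functor $\HSeq\Gamma_M\to\HSeq\Gamma_M/(f)$. Composing with the functor from (1) gives a triangle functor
$$\left ( \HFact_d ( \C,S,\eta ),\Sigma,\Delta \right )\longrightarrow\HSeq\Gamma_M/(f)$$
whose action on objects and morphisms is as described in the statement. It remains to see the image lies in $\K_d\Gamma_M/(f)$, i.e.\ that every $d$-fold composition in the image of $(M_i,f_i)$ vanishes. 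By the defining equations of a factorization, the $d$-fold composition of differentials beginning at $\Hom_{\C}(M,M_i)$ equals $(\eta_{M_i})_*$; for $\alpha\in\Hom_{\C}(M,M_i)$, naturality of $\eta$ gives $\eta_{M_i}\circ\alpha=S\alpha\circ\eta_M$, and writing $\eta_M=g\circ f$ with $g\colon M\to SM$ (the hypothesis that $\eta_M$ factors through $f$) yields $\eta_{M_i}\circ\alpha=(S\alpha\circ g)\circ f\in\Hom_{\C}(M,SM_i)\cdot f$, which is $0$ in the quotient. Hence the image is a $d$-complex, and since $\K_d\Gamma_M/(f)$ carries the triangulated structure inherited from $\HSeq\Gamma_M/(f)$ — the mapping cone of a chain map between $d$-complexes is again a $d$-complex because $d$ is even, for the reason recorded in Remark \ref{rem:deven} — the corestricted functor is a triangle functor into $\K_d\Gamma_M/(f)$.

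\textbf{Main obstacle.} There is no deep difficulty; the work is essentially bookkeeping. The two points needing care are: matching the sign and periodicity conventions so that $\Fact_d$-homotopies unroll to genuine chain homotopies (in particular that the $S^{-1}$-twisted term sits in the right degree) and that $\Sigma$ corresponds to the shift; and, in (2), pinning down exactly where each hypothesis enters — centrality of $f$ is what makes reduction modulo $f$ a functor on one-sided modules, and the factorization of $\eta_M$ through $f$ is exactly what forces the $d$-fold compositions to vanish.
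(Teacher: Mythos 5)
Your proof is correct and follows essentially the same route as the paper: build the functor termwise from $\Hom_{\C}(M,-)$ by periodic unrolling, observe that factorization homotopies become chain homotopies and that $\Sigma$ becomes the shift so standard triangles are preserved, then obtain (2) by post-composing with reduction modulo $f$ and using naturality of $\eta$ together with the factorization $\eta_M = h\circ f$ to show the $d$-fold compositions vanish in the quotient. The only difference is that you spell out a few bookkeeping points (the $S^{-1}$-twist in the first homotopy equation, the inherited triangulated structure on $\K_d\Gamma_M/(f)$) which the paper leaves implicit.
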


\begin{proof}
For (1), note that a homotopy between morphisms in $\Fact_d ( \C,S, \eta )$ induces a homotopy between chain maps of sequences of right $\Gamma_M$-modules, in a natural way. Thus we obtain an additive functor $\HFact_d ( \C,S, \eta ) \to \HSeq \Gamma_M$. It commutes with the suspension in both categories, since this is just given by left shift of sequences. Finally, the functor preserves distinguished triangles, since these are defined in terms of mapping cones and standard triangles in both triangulated categories. 

For (2), note first that the tensor product $- \otimes_{\Gamma_M} \Gamma_M /(f)$ induces a triangle functor
\begin{center}
\begin{tikzpicture}
\diagram{d}{3em}{4em}{
\HSeq \Gamma_M & \HSeq \Gamma_M/(f) \\
 };
\path[->, font = \scriptsize, auto]
(d-1-1) edge (d-1-2);
\end{tikzpicture}
\end{center}
Composing this with the functor from (1) therefore gives a triangle functor
\begin{center}
\begin{tikzpicture}
\diagram{d}{3em}{4em}{
\left ( \HFact_d \left ( \C,S, \eta \right ), \Sigma, \Delta \right ) & \HSeq \Gamma_M/(f) \\
 };
\path[->, font = \scriptsize, auto]
(d-1-1) edge (d-1-2);
\end{tikzpicture}
\end{center}
as in the statement. We must show that the image of this functor consists of $d$-complexes. By assumpotion, the morphism $\eta_M$ factors through $f$, so there is a morphism $h \in \Hom_{\C} \left ( M, SM \right )$ with $\eta_M = h \circ f$.

Let us prove that the composition $\overline{(f_d)_*} \circ \overline{(f_{d-1})_*} \circ \cdots \circ \overline{(f_1)_*}$ is zero; the vanishing of the other $d$-fold compositions is proved in the same way. For an element $g \in \Hom_{\C} \left ( M,M_1 \right )$, the composition maps the image $\overline{g}$ in $\Hom_{\C} \left ( M,M_1 \right ) / \Hom_{\C} \left ( M,M_1 \right ) \cdot f$ to the element $\overline{f_d \circ \cdots \circ f_1 \circ g}$ in $\Hom_{\C} \left ( M,SM_1 \right ) / \Hom_{\C} \left ( M,SM_1 \right ) \cdot f$. Now note that $f_d \circ \cdots \circ f_1 = \eta_{M_1}$, and that $\eta$ being a natural transformation gives a commutative diagram
\begin{center}
\begin{tikzpicture}
\diagram{d}{3em}{4em}{
M & M_1 \\
SM & SM_1 \\
 };
\path[->, font = \scriptsize, auto]
(d-1-1) edge node{$g$} (d-1-2)
(d-2-1) edge node{$Sg$} (d-2-2)
(d-1-1) edge node{$\eta_M$} (d-2-1)
(d-1-2) edge node{$\eta_{M_1}$} (d-2-2);
\end{tikzpicture}
\end{center}
in $\C$. This gives
$$f_d \circ \cdots \circ f_1  \circ g = \eta_{M_1} \circ g = Sg \circ \eta_M = Sg \circ h \circ f$$
which is an element of the right $\Gamma_M$-module $\Hom_{\C} \left ( M,SM_1 \right ) \cdot f$. This shows that the composition $\overline{(f_d)_*} \circ \overline{(f_{d-1})_*} \circ \cdots \circ \overline{(f_1)_*}$ is zero.
\end{proof}

We shall now turn our attention to categories $\C$ of the form $\add M$ for an object $M \in \C$, where $\add M$ denotes the additive closure of $M$. Recall that this is the additive category whose objects are all the possible retracts of the finite direct sums of copies of $M$, i.e.\ every object $X \in \C$ for which there exist a positive integer $n$ and morphisms
\begin{center}
\begin{tikzpicture}
\diagram{d}{3em}{4em}{
X & M^{\oplus n} & X \\
 };
\path[->, font = \scriptsize, auto]
(d-1-1) edge node{$i$} (d-1-2)
(d-1-2) edge node{$p$} (d-1-3);
\end{tikzpicture}
\end{center}
with $p \circ i = 1_X$. When $\C$ is of this form for an object $M \in \C$, then $\Hom_{\C}(M,N)$ is a finitely generated  projective right $\Gamma_M$-module for every $N \in \C$, where $\Gamma_M = \Hom_{\C}(M,M)$. The theorem therefore takes the following form.

\begin{corollary}\label{cor:functors}
With the same assumptions as in \emph{Theorem \ref{thm:functors}}, suppose in addition that $\C = \add M$. Furthermore, denote by $\Proj ( \Gamma_M )$ and $\Proj ( \Gamma_M/(f) )$ the categories of finitely generated projective right $\Gamma_M$-modules and $\Gamma_M/(f)$-modules, respectively. Then the functor $\Hom_{\C}(M,-)$ induces a triangle functor
\begin{center}
\begin{tikzpicture}
\diagram{d}{3em}{4em}{
\left ( \HFact_d \left ( \C,S, \eta \right ), \Sigma, \Delta \right ) & \HSeq \Proj \left ( \Gamma_M \right ) \\
 };
\path[->, font = \scriptsize, auto]
(d-1-1) edge (d-1-2);
\end{tikzpicture}
\end{center}
and the functor $\Hom_{\C}(M,-) \otimes_{\Gamma_M} \Gamma_M /(f)$ induces a triangle functor
\begin{center}
\begin{tikzpicture}
\diagram{d}{3em}{4em}{
\left ( \HFact_d \left ( \C,S, \eta \right ), \Sigma, \Delta \right ) & \K_d \Proj \left ( \Gamma_M/(f) \right ) \\
 };
\path[->, font = \scriptsize, auto]
(d-1-1) edge (d-1-2);
\end{tikzpicture}
\end{center}
\end{corollary}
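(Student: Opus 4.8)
The plan is to read the corollary off from Theorem~\ref{thm:functors}: under the extra hypothesis $\C = \add M$ the two triangle functors built there automatically land in the full triangulated subcategories $\HSeq \Proj(\Gamma_M) \subseteq \HSeq \Gamma_M$ and $\K_d \Proj(\Gamma_M/(f)) \subseteq \K_d \Gamma_M/(f)$, and one only has to corestrict.

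First I would record why $\C = \add M$ forces projectivity. For $X \in \C = \add M$ choose $i \colon X \to M^{\oplus n}$ and $p \colon M^{\oplus n} \to X$ with $p \circ i = 1_X$; applying the additive functor $\Hom_{\C}(M,-)$ exhibits $\Hom_{\C}(M,X)$ as a retract of $\Hom_{\C}(M,M^{\oplus n}) \cong \Gamma_M^{\oplus n}$, so it is a finitely generated projective right $\Gamma_M$-module, as already noted just before the statement. Hence for every object of $\HFact_d(\C,S,\eta)$ the sequence of $\Gamma_M$-modules produced by Theorem~\ref{thm:functors}(1) has all of its terms in $\Proj(\Gamma_M)$. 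Likewise, base change along $\Gamma_M \to \Gamma_M/(f)$ carries retracts of $\Gamma_M^{\oplus n}$ to retracts of $(\Gamma_M/(f))^{\oplus n}$, and (using that $f$ is central) $\Hom_{\C}(M,X) \otimes_{\Gamma_M} \Gamma_M/(f) \cong \Hom_{\C}(M,X)/\Hom_{\C}(M,X)\cdot f$, which is exactly the module appearing in Theorem~\ref{thm:functors}(2); so the $d$-complex produced there has all of its terms in $\Proj(\Gamma_M/(f))$.

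It remains to see that this gives the corestrictions. Since $\Proj(\Gamma_M)$ is a full subcategory of $\Mod\Gamma_M$ closed under finite direct sums, $\HSeq\Proj(\Gamma_M)$ is precisely the full subcategory of $\HSeq\Gamma_M$ on those sequences whose terms are finitely generated projective, and it is closed under the shift and under mapping cones; thus it is a full triangulated subcategory, and similarly for $\K_d\Proj(\Gamma_M/(f)) \subseteq \K_d\Gamma_M/(f)$ (with Remark~\ref{rem:deven} accounting for the parity of $d$). By the previous paragraph the functor of Theorem~\ref{thm:functors}(1) takes objects, and trivially morphisms, into $\HSeq\Proj(\Gamma_M)$; since mapping cones and standard triangles are computed by the same termwise formulas as in $\HSeq\Gamma_M$, the corestriction $\HFact_d(\C,S,\eta) \to \HSeq\Proj(\Gamma_M)$ is again a triangle functor, with action on objects and morphisms inherited verbatim from Theorem~\ref{thm:functors}(1); the same reasoning with $\Gamma_M/(f)$ in place of $\Gamma_M$ and $\K_d$ in place of $\HSeq$ yields the second functor. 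There is no genuinely hard step -- all the content is in Theorem~\ref{thm:functors} -- and the only point meriting explicit mention is closure of $\Proj(\Gamma_M)$ and $\Proj(\Gamma_M/(f))$ under finite direct sums, since that is what ensures the mapping cone construction does not leave these subcategories when one corestricts.
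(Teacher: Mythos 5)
Your proof is correct and follows essentially the same approach the paper takes: the paper's (one-sentence) justification is exactly the observation that $\Hom_{\C}(M,N)$ is a finitely generated projective right $\Gamma_M$-module for $N \in \add M$, so the two functors of Theorem~\ref{thm:functors} corestrict to the subcategories of projectives; you have merely spelled out the details of why the corestriction remains a triangle functor.
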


Before we specialize further, we include two elementary lemmas. The first one is well known in the case of modules over rings, but we were unable to find a reference for the general case.

\begin{lemma}\label{lem:elementary1}
Let $\C$ be an additive category, $M \in \C$ and object, and denote $\Hom_{\C}(M,M)$ by $\Gamma_M$. Then the functor $\Hom_{\C}(M,-)$, from $\C$ to the category of right $\Gamma_M$-modules, is fully faithful on $\add M$.
\end{lemma}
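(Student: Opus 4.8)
The plan is to reduce to the case $X=Y=M$ by additivity, and then pass from finite direct sums of copies of $M$ to their retracts. Throughout, write $F=\Hom_{\C}(M,-)$, viewed as a functor from $\C$ to the category of right $\Gamma_M$-modules, where the right action of $\gamma\in\Gamma_M$ on $\alpha\in\Hom_{\C}(M,N)$ is precomposition $\alpha\cdot\gamma=\alpha\circ\gamma$; note that $F$ sends $\phi\colon N\to N'$ to postcomposition $\phi_*=\Hom_{\C}(M,\phi)$, which is right $\Gamma_M$-linear by associativity of composition. For $X,Y\in\C$ the claim is that the canonical map
$$F_{X,Y}\colon \Hom_{\C}(X,Y)\longrightarrow \Hom_{\Gamma_M}\bigl(FX,FY\bigr),\qquad \phi\mapsto\phi_*,$$
is bijective whenever $X,Y\in\add M$.

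First I would treat $X=Y=M$. Here $FM=\Gamma_M$ is the free right $\Gamma_M$-module of rank one, and evaluation at $1_M$ is the standard isomorphism $\Hom_{\Gamma_M}(\Gamma_M,\Gamma_M)\xrightarrow{\ \sim\ }\Gamma_M$; composing with $F_{M,M}$ sends $\phi\in\Gamma_M$ to $\phi_*(1_M)=\phi\circ 1_M=\phi$, so $F_{M,M}$ is the identity and in particular bijective. Next, since $F$ is additive and $\Hom_{\Gamma_M}(-,-)$ carries finite direct sums in either variable to finite products (equivalently, direct sums) of abelian groups, the map $F_{M^{\oplus m},M^{\oplus n}}$ is bijective for all $m,n\ge 0$: it is precisely the $m\times n$ "matrix" whose entries are copies of the bijective map $F_{M,M}$.

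Finally I would handle arbitrary $X,Y\in\add M$ by a retract argument. Choose $n,m$ together with morphisms $i\colon X\to P:=M^{\oplus n}$, $p\colon P\to X$ with $pi=1_X$, and $j\colon Y\to Q:=M^{\oplus m}$, $q\colon Q\to Y$ with $qj=1_Y$. For faithfulness: if $\phi\colon X\to Y$ satisfies $\phi_*=0$, then writing $p$ through its components $p_k\in\Hom_{\C}(M,X)$ gives $\phi\circ p_k=\phi_*(p_k)=0$ for each $k$, hence $\phi\circ p=0$, hence $\phi=\phi\circ p\circ i=0$. For fullness: given a $\Gamma_M$-linear map $\psi\colon FX\to FY$, set $\hat\psi:=(Fj)\circ\psi\circ(Fp)\colon FP\to FQ$; by the direct-sum case there is $\theta\colon P\to Q$ with $F\theta=\hat\psi$, and then $\phi:=q\circ\theta\circ i\colon X\to Y$ satisfies
$$F\phi=(Fq)(F\theta)(Fi)=(Fq)(Fj)\,\psi\,(Fp)(Fi)=F(qj)\,\psi\,F(pi)=\psi,$$
as wanted.

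There is no real obstacle here: this is the familiar statement that $\Hom_{\C}(M,-)$ is fully faithful on $\add M$, and the only points deserving attention are the bookkeeping of the left/right $\Gamma_M$-module conventions and the (standard) additivity of $\Hom_{\Gamma_M}(-,-)$ in each variable that powers the passage from $M$ to $M^{\oplus n}$; the retract step is then purely formal.
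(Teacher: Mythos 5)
Your proof is correct and follows essentially the same three-step reduction as the paper (base case, finite direct sums, retracts). The only organizational difference is that the paper keeps $Y$ arbitrary throughout and reduces only the variable $X$, using the evaluation isomorphism $\Hom_{\Gamma_M}(\Gamma_M,\Hom_{\C}(M,Y))\cong\Hom_{\C}(M,Y)$ as the base case, whereas you reduce both variables to $M$ and then pass to direct sums and retracts in each; both routes are equally valid.
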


\begin{proof}
Given objects $X,Y \in \C$, the group homomorphism
\begin{center}
\begin{tikzpicture}
\diagram{d}{3em}{4em}{
\Hom_{\C}(X,Y) & \Hom_{\Gamma_M} \left ( \Hom_{\C}(M,X), \Hom_{\C}(M,Y) \right ) \\
 };
\path[->, font = \scriptsize, auto]
(d-1-1) edge (d-1-2);
\end{tikzpicture}
\end{center}
maps a morphism $f$ to $f_*$, with $f_*(g) = f \circ g$. It is bijective when $X=M$: in this case the right hand side is just $\Hom_{\Gamma_M} ( \Gamma_M, \Hom_{\C}(M,Y) )$, and so when we compose with the natural evaluation isomorphism
\begin{center}
\begin{tikzpicture}
\diagram{d}{3em}{4em}{
\Hom_{\Gamma_M} \left ( \Gamma_M, \Hom_{\C}(M,Y) \right ) & \Hom_{\C}(M,Y) \\
 };
\path[->, font = \scriptsize, auto]
(d-1-1) edge (d-1-2);
\end{tikzpicture}
\end{center}
the result is the identity on $\Hom_{\C}(M,Y)$. By direct sum arguments, the homomorphism is also bijective when $X = M^{\oplus n}$ for $n \ge 1$, and finally also when $X$ is a retract of such an object. 
\end{proof}

The second lemma is also most likely known, but again we were unable to find a reference.

\begin{lemma}\label{lem:elementary2}
Let $\Gamma$ be a ring and $x \in \Gamma$ a central element. Then for every finitely generated projective right $\Gamma$-module $P$, there is an isomorphism
$$\Hom_{\Gamma} \left ( P, \Gamma \right ) / \Hom_{\Gamma} \left ( P, \Gamma \right ) x \simeq \Hom_{\Gamma / (x)} \left ( P/Px, \Gamma / (x) \right )$$
of abelian groups, natural in $P$.
\end{lemma}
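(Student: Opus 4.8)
The plan is to produce a single natural transformation and show it is an isomorphism, first on the rank-one free module, then on finite free modules by additivity, and finally on arbitrary finitely generated projectives by a retract argument.

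First I would define, for every finitely generated projective right $\Gamma$-module $P$, a homomorphism of abelian groups
$$\Phi_P \colon \Hom_{\Gamma}(P,\Gamma)/\Hom_{\Gamma}(P,\Gamma)x \longrightarrow \Hom_{\Gamma/(x)}(P/Px,\Gamma/(x))$$
sending the class of $\alpha \colon P \to \Gamma$ to the reduction $\overline{\alpha} \colon P/Px \to \Gamma/\Gamma x = \Gamma/(x)$, $\overline{p} \mapsto \overline{\alpha(p)}$. Here the centrality of $x$ is used repeatedly: it makes $Px$ a submodule of $P$ and $\Gamma x = (x)$ a two-sided ideal, it guarantees that for $\phi \in \Hom_{\Gamma}(P,\Gamma)$ the map $p \mapsto \phi(p)x$ is again $\Gamma$-linear (so that $\Hom_{\Gamma}(P,\Gamma)x$ is a subgroup), and it gives $\alpha(Px) = \alpha(P)x \subseteq \Gamma x$, so that $\overline{\alpha}$ is well defined. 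Moreover $\Phi_P$ is well defined on the quotient: if $\alpha(p) = \beta(p)x$ for some $\beta$, then $\alpha(p) \in \Gamma x$ for all $p$, hence $\overline{\alpha} = 0$. Naturality in $P$ (with respect to $\Gamma$-linear maps) is immediate, since $\Phi$ is just ``reduce modulo $x$''; equivalently, $\Phi_P$ is the base-change morphism $\Hom_{\Gamma}(P,\Gamma) \otimes_{\Gamma} \Gamma/(x) \to \Hom_{\Gamma/(x)}(P \otimes_{\Gamma} \Gamma/(x), \Gamma/(x))$ under the identifications $\Hom_{\Gamma}(P,\Gamma) \otimes_{\Gamma} \Gamma/(x) = \Hom_{\Gamma}(P,\Gamma)/\Hom_{\Gamma}(P,\Gamma)x$ and $P \otimes_{\Gamma} \Gamma/(x) = P/Px$.

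Next I would check that $\Phi_{\Gamma}$ is an isomorphism. Under the evaluation isomorphism $\Hom_{\Gamma}(\Gamma,\Gamma) \xrightarrow{\sim} \Gamma$, $\alpha \mapsto \alpha(1)$, the subgroup $\Hom_{\Gamma}(\Gamma,\Gamma)x$ corresponds to $\Gamma x = (x)$, so the left-hand side is identified with $\Gamma/(x)$; similarly $\Hom_{\Gamma/(x)}(\Gamma/(x),\Gamma/(x))$ is identified with $\Gamma/(x)$ by evaluation at $\overline{1}$, and under these identifications $\Phi_{\Gamma}$ is the identity of $\Gamma/(x)$. Since both $P \mapsto \Hom_{\Gamma}(P,\Gamma)/\Hom_{\Gamma}(P,\Gamma)x$ and $P \mapsto \Hom_{\Gamma/(x)}(P/Px,\Gamma/(x))$ are additive functors of $P$ (Hom turns finite coproducts into products, and both $(-)/(-)x$ and reduction modulo $x$ are additive) and $\Phi$ is natural, it follows that $\Phi_{\Gamma^{n}}$ is an isomorphism for every $n \ge 1$.

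Finally, let $P$ be an arbitrary finitely generated projective right $\Gamma$-module, and choose $Q$ and an isomorphism $P \oplus Q \cong \Gamma^{n}$. The split inclusion $\iota \colon P \to \Gamma^{n}$ and split projection $\pi \colon \Gamma^{n} \to P$ with $\pi \iota = 1_P$ induce, by naturality of $\Phi$, a commutative diagram exhibiting $\Phi_P$ as a retract of $\Phi_{\Gamma^{n}}$ in the arrow category of abelian groups; as a retract of an isomorphism is an isomorphism, $\Phi_P$ is an isomorphism, and this is the asserted natural isomorphism. I expect the only genuine point requiring care is the sidedness/centrality bookkeeping in the very first step — checking that all the indicated subgroups are actually submodules and that $\Phi_P$ is well defined — while the rank-one case, the additivity reduction, and the retract argument are routine.
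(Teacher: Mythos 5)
Your proposal is correct and takes essentially the same approach as the paper: define the reduction-mod-$x$ map (which is well defined on the quotient by centrality), verify it is an isomorphism for $P=\Gamma$ by evaluation, and propagate to finitely generated free modules by additivity and then to arbitrary finitely generated projectives by a retract argument. The paper is just more terse about the well-definedness and retract bookkeeping that you spell out.
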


\begin{proof}
Note first that since $x$ is a central element of $\Gamma$, we obtain for each $f \in \Hom_{\Gamma} \left ( P, \Gamma \right )$ a well defined homomorphism $fx \in \Hom_{\Gamma} \left ( P, \Gamma \right )$ given by $(fx)(p) = f(px)$ for $p \in P$. Thus $\Hom_{\Gamma}( P, \Gamma )x$ is a subgroup of $\Hom_{\Gamma}( P, \Gamma )$.

Consider the natural group homomorphism
\begin{center}
\begin{tikzpicture}
\diagram{d}{3em}{4em}{
\Hom_{\Gamma} \left ( P,A \right ) & \Hom_{\Gamma / (x)} \left ( P/Px, \Gamma / (x) \right ) \\
 };
\path[->, font = \scriptsize, auto]
(d-1-1) edge node{$\tau$} (d-1-2);
\end{tikzpicture}
\end{center}
given by reduction modulo $x$. Since $\Hom_{\Gamma}( P, \Gamma )x$ is contained in the kernel of $\tau$, we obtain an induced homomorphism
\begin{center}
\begin{tikzpicture}
\diagram{d}{3em}{4em}{
\Hom_{\Gamma} \left ( P,A \right ) / \Hom_{\Gamma} \left ( P, \Gamma \right ) x & \Hom_{\Gamma / (x)} \left ( P/Px, \Gamma / (x) \right ) \\
 };
\path[->, font = \scriptsize, auto]
(d-1-1) edge node{$\overline{\tau}$} (d-1-2);
\end{tikzpicture}
\end{center}
One checks that this is an isomorphism when $P = \Gamma$, hence also when $P$ is finitely generated free, and finally also when $P$ is a direct summand of a finitely generated free module. Finally, one checks that the isomorphism is natural in $P$.
\end{proof}

We now restrict the homotopy category $\HFact_d \left ( \C,S, \eta \right )$ to the case when $d=2$ and $S = 1_{\C}$. By definition, $\eta$ is then a natural transformation $1_{\C} \to 1_{\C}$, in particular $\eta_M$ is a central element in $\Hom_{\C}(M,M)$ for every object $M \in \C$. The objects of $\HFact_2 \left ( \C,1_{\C}, \eta \right )$ are diagrams
\begin{center}
\begin{tikzpicture}
\diagram{d}{3em}{4em}{
X & Y & X \\
 };
\path[->, font = \scriptsize, auto]
(d-1-1) edge node{$f$} (d-1-2)
(d-1-2) edge node{$g$} (d-1-3);
\end{tikzpicture}
\end{center}
in $\C$, with $g \circ f = \eta_X$ and $f \circ g = \eta_Y$. 

The following result shows that when $\C = \add M$ for some $M \in \C$, and $\eta_M$ is a regular element of $\Hom_{\C}(M,M)$, then the second functor in Corollary \ref{cor:functors} is actually fully faithful. Moreover, its image consists of \emph{totally acyclic} complexes of finitely generated projective right $\Gamma_M / ( \eta_M )$-modules. Recall that a complex $C$ (of, say, right modules) over a ring $\Gamma$ is totally acyclic if both $C$ and $\Hom_{\Gamma}(C, \Gamma )$ are acyclic, that is, exact. We denote by $\Ktac \Proj ( \Gamma )$ the homotopy category of such totally acyclic complexes of finitely generated  projective right modules; the triangulated structure is the same as for the homotopy category of complexes. Thus the following result shows that under the above assumptions, the category $( \HFact_2 ( \C, 1_{\C}, \eta ), \Sigma, \Delta )$ embeds as a triangulated subcategory into the category of totally acyclic complexes of finitely generated projective right $\Gamma_M/( \eta_M )$-modules, where $\Gamma_M = \Hom_{\C}(M,M)$.

\begin{theorem}\label{thm:fullyfaithful}
Let $\C$ be an additive category, and $\eta \colon 1_{\C} \to 1_{\C}$ a natural transformation. Furthermore, suppose that $\C = \add M$ for some object $M \in \C$, and that $\eta_M$ is a regular element of $\Hom_{\C}(M,M)$. Then the functor $\Hom_{\C}(M,-) \otimes_{\Gamma_M} \Gamma_M /( \eta_M )$ induces a fully faithful triangle functor
\begin{center}
\begin{tikzpicture}
\diagram{d}{3em}{4em}{
\left ( \HFact_2 \left ( \C, 1_{\C}, \eta \right ), \Sigma, \Delta \right ) & \Ktac \Proj \left ( \Gamma_M/( \eta_M ) \right ) \\
 };
\path[->, font = \scriptsize, auto]
(d-1-1) edge (d-1-2);
\end{tikzpicture}
\end{center}
where $\Gamma_M = \Hom_{\C}(M,M)$, and $\Proj ( \Gamma_M/( \eta_M) )$ denotes the category of finitely generated projective right $\Gamma_M/( \eta_M )$-modules.
\end{theorem}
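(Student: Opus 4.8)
The plan is to reduce to the classical situation of matrix factorizations of a central regular element in a ring, and then identify both relevant Hom-groups with a stable Hom-group of cokernel modules. Write $\Gamma = \Gamma_M = \Hom_\C(M,M)$, $x = \eta_M$ (central in $\Gamma$, since $\eta$ commutes with endomorphisms of $M$ by naturality), and $\Lambda = \Gamma/(x)$. By Lemma~\ref{lem:elementary1} the functor $\Hom_\C(M,-)$ is an equivalence from $\C = \add M$ onto the category $\Proj\Gamma$ of finitely generated projective right $\Gamma$-modules; under it the automorphism $1_\C$ corresponds to the identity and, again by naturality of $\eta$, the transformation $\eta$ corresponds to right multiplication by $x$. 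Hence $\Hom_\C(M,-)$ induces an equivalence $\HFact_2(\C,1_\C,\eta) \xrightarrow{\ \sim\ } \HFact_2(\Proj\Gamma, 1, x\cdot)$, and the functor in the statement becomes, up to this equivalence, the reduction-modulo-$x$ functor. So I may work with matrix factorizations $A = (P \xrightarrow{a} Q \xrightarrow{b} P)$ of $x$ (that is, $ba = x\cdot 1_P$, $ab = x\cdot 1_Q$, with $P,Q \in \Proj\Gamma$), and with the associated reduced $2$-periodic complex $\bar A = (\cdots \to \bar P \xrightarrow{\bar a} \bar Q \xrightarrow{\bar b} \bar P \to \cdots)$ over $\Lambda$.

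First I would show the image consists of totally acyclic complexes. Since $x$ is regular, multiplication by $x$ is injective on every finitely generated projective $\Gamma$-module; together with $ba = x\cdot 1_P$ and $ab = x\cdot 1_Q$ this yields, exactly as in the commutative case, that $\bar A$ is acyclic. For the dual, the $\Gamma$-dual $\Hom_\Gamma(-,\Gamma)$ carries $A$ to a matrix factorization $(b^\vee, a^\vee)$ of $x$ over $\Gamma$ (now of projective left modules), and by Lemma~\ref{lem:elementary2} its reduction modulo $x$ is naturally isomorphic, term by term and compatibly with differentials, to $\Hom_\Lambda(\bar A, \Lambda)$; the latter is therefore acyclic by the same argument. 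So $\bar A$ is totally acyclic; hence the functor lands in the full triangulated subcategory $\Ktac\Proj(\Lambda)$ of $\K\Proj(\Lambda)$, and being the corestriction of the triangle functor of Corollary~\ref{cor:functors} (with $d=2$, $f = \eta_M$) it is itself a triangle functor.

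For full faithfulness, associate to $A = (P \xrightarrow{a} Q \xrightarrow{b} P)$ the $\Lambda$-module $C_A := \Coker a$: injectivity of $a$ (from $ba = x\cdot 1_P$ and regularity) gives a short exact sequence $0 \to P \xrightarrow{a} Q \to C_A \to 0$ of $\Gamma$-modules, and $ab = x\cdot 1_Q$ shows $x$ annihilates $C_A$; by the previous step $\bar A$ is a complete resolution of $C_A$, so $C_A$ is Gorenstein-projective over $\Lambda$. I would then prove that the functor factors as
\[
\Hom_{\HFact_2(\Proj\Gamma,1,x\cdot)}(A,B) \xrightarrow{\ \cong\ } \underline{\Hom}_\Lambda(C_A, C_B) \xleftarrow{\ \cong\ } \Hom_{\Ktac\Proj(\Lambda)}(\bar A, \bar B) .
\]
The right-hand isomorphism is the standard correspondence identifying a totally acyclic complex with the stable class of its cokernel module, and morphisms of complete resolutions (modulo homotopy) with stable maps. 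For the left-hand one: a morphism $(\varphi_1,\varphi_2)\colon A \to B$ induces $\bar\varphi_2\colon C_A \to C_B$; a matrix-factorization homotopy makes this map factor through the projective $\Lambda$-module $\bar P$ (using $ba = x\cdot 1_P$ and $\bar b\bar a = 0$), so it descends to $\underline{\Hom}_\Lambda$. It is surjective because a map $\gamma\colon C_A \to C_B$ lifts, through the finite $\Gamma$-projective resolutions $0 \to P \xrightarrow{a} Q \to C_A \to 0$ and $0 \to P' \xrightarrow{a'} Q' \to C_B \to 0$, to a pair $(\varphi_1,\varphi_2)$ with $\varphi_2 a = a'\varphi_1$; and it is injective by reversing this, lifting a factorization through a projective to a matrix-factorization homotopy. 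A final compatibility check then gives that the functor is fully faithful.

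The main obstacle is the left-hand isomorphism above, specifically the passage from the lifted pair $(\varphi_1,\varphi_2)$ (or the lifted homotopy) to a genuine morphism (resp.\ homotopy) in $\Fact_2$. The lifts exist by projectivity over $\Gamma$, but there is a priori no reason for them to satisfy the \emph{second} commuting square of a matrix factorization, i.e.\ $b'\varphi_2 = \varphi_1 b$: one only gets that $b'\varphi_2 - \varphi_1 b$ kills $\Im a$, hence factors through $C_A$, as a $\Gamma$-linear map into the projective module $P'$. The point — and this is exactly where regularity of $\eta_M$ is indispensable — is that the image of such a map lies in $\Ann_{P'}(x)$, which is zero because $x$ is regular and $P'$ is projective; so the error term vanishes and the lift is automatically a matrix-factorization morphism. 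Carrying the same ``no torsion, so the obstruction dies'' mechanism through the homotopy bookkeeping (where it also absorbs the failure of naive $2$-periodicity) is the technical heart of the proof.
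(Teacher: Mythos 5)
Your proposal is correct, but it takes a genuinely different route from the paper. After the common first steps — using Lemma~\ref{lem:elementary1} to pass to $\Proj\Gamma_M$, using regularity of $\eta_M$ for acyclicity, and Lemma~\ref{lem:elementary2} for total acyclicity — the paper proves full faithfulness at the \emph{chain level}: given a chain map (resp.\ nullhomotopy) between the reduced $2$-periodic complexes, it adapts \cite[Propositions 3.3 and 3.4]{BerghJorgensen} to lift it, modulo a chain homotopy, to a morphism (resp.\ homotopy) of factorizations over $\Gamma_M$, with the regularity of $\eta_M$ killing the obstruction at each step. You instead pass through the cokernel modules: you identify $\Hom$ in the factorization homotopy category with $\underline{\Hom}_\Lambda(C_A,C_B)$, and then invoke the standard comparison theorem identifying morphisms of complete resolutions up to homotopy with stable maps of the Gorenstein-projective cokernels. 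This is essentially the Eisenbud/Buchweitz point of view, and it is a valid alternative: the two bijections you assert do hold, and the ``no torsion, so the obstruction dies'' mechanism carries through (not only for the second commuting square in the fullness step, as you spell out, but also in the faithfulness step, where one lifts a factorization of $\bar\varphi_2$ through $\bar Q'$ to maps $s_1,s_2$ over $\Gamma_M$; the two error terms there are $\Gamma_M$-linear maps from the $\eta_M$-torsion module $C_A$ into the $\eta_M$-torsion-free projectives $P'$ and $Q'$, hence vanish). The compatibility check $F = R^{-1}L$ you mention at the end is immediate, since both $F$ followed by ``take cokernel'' and $L$ send $(\varphi_1,\varphi_2)$ to $\bar\varphi_2$. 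Your route is arguably more conceptual and makes the role of regularity more transparent; the paper's route is more self-contained in that it does not invoke the general comparison theorem for complete resolutions, and it stays closer to the existing argument in \cite{BerghJorgensen}.
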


\begin{proof}
For simplicity, let us denote the ring $\Gamma_M$ by just $\Gamma$. Moreover, given an object $X \in \C$, let us denote the projective right $\Gamma$-module $\Hom_{\C}(M,X)$ by $e(X)$. Finally, we denote the quotient ring $\Gamma /( \eta_M)$ by $\overline{\Gamma}$, and the projective right $\overline{\Gamma}$-module $e(X)/e(X) \cdot \eta_M$ by $\overline{e}(X)$.

The image of a factorization
\begin{center}
\begin{tikzpicture}
\diagram{d}{3em}{4em}{
X & Y & X \\
 };
\path[->, font = \scriptsize, auto]
(d-1-1) edge node{$f$} (d-1-2)
(d-1-2) edge node{$g$} (d-1-3);
\end{tikzpicture}
\end{center}
is the sequence
\begin{equation*}\label{sequence}\tag{$\dagger$}
\begin{tikzpicture}
\diagram{d}{3em}{4em}{
\cdots & \overline{e}(X) & \overline{e}(Y) & \overline{e}(X) & \overline{e}(Y) & \cdots \\
 };
\path[->, font = \scriptsize, auto]
(d-1-1) edge (d-1-2)
(d-1-2) edge node{$\overline{f_*}$} (d-1-3)
(d-1-3) edge node{$\overline{g_*}$} (d-1-4)
(d-1-4) edge node{$\overline{f_*}$} (d-1-5)
(d-1-5) edge (d-1-6);
\end{tikzpicture}
\end{equation*}
of finitely generated projective right $\overline{\Gamma}$-modules. By Corollary \ref{cor:functors} the sequence is a complex; we show first that it is totally acyclic.

Let $\overline{w}$ be an element in $\overline{e}(Y)$, represented by a morphism $w \in e(Y) = \Hom_{\C}(M,Y)$, and suppose that $0 = \overline{g_*} ( \overline{w} ) = \overline{ g \circ w }$. Then $g \circ w \in e(X) \cdot \eta_M$, and so $g \circ w = u \circ \eta_M$ for some $u \in \Hom_{\C}(M,X)$. This gives 
$$w \circ \eta_M = \eta_Y \circ w = f \circ g \circ w = f \circ u \circ \eta_M$$
where the first equality follows from the fact that $\eta$ is a natural transformation. Now since $\eta_M$ is a regular element of $\Gamma$, it is also regular on every projective right $\Gamma$-module, in particular on $e(Y)$. Therefore $w = f \circ u$, giving $\overline{w} = \overline{f \circ u} = \overline{f_*} ( \overline{u} )$. This shows that $\Ker \overline{g_*} = \Im \overline{f_*}$, and similarly $\Ker \overline{f_*} = \Im \overline{g_*}$. Consequently, the complex (\ref{sequence}) is acyclic.

To show that the complex is totally acyclic, we must show that it is acyclic after we have applied $\Hom_{\overline{\Gamma}}(-, \overline{\Gamma} )$ to it. However, by Lemma \ref{lem:elementary2}, this is equivalent to showing that the complex
\begin{center}
\begin{tikzpicture}
\diagram{d}{3em}{3em}{
\cdots & \frac{ \Hom_{\Gamma} \left ( e(X), \Gamma \right ) }{  \Hom_{\Gamma} \left ( e(X), \Gamma \right ) \cdot \eta_M } & \frac{ \Hom_{\Gamma} \left ( e(Y), \Gamma \right ) }{  \Hom_{\Gamma} \left ( e(Y), \Gamma \right ) \cdot \eta_M } & \frac{ \Hom_{\Gamma} \left ( e(X), \Gamma \right ) }{  \Hom_{\Gamma} \left ( e(X), \Gamma \right ) \cdot \eta_M }& \cdots \\
 };
\path[->, font = \scriptsize, auto]
(d-1-1) edge (d-1-2)
(d-1-2) edge node{$\overline{(g_*)^*}$} (d-1-3)
(d-1-3) edge node{$\overline{(f_*)^*}$} (d-1-4)
(d-1-4) edge (d-1-5);
\end{tikzpicture}
\end{center}
is acyclic. For the maps displayed in this complex, the lower star denotes the functor $\Hom_{\C}(M,-)$, while the upper star denotes the (contravariant) functor $\Hom_{\Gamma}(-, \Gamma )$. Now take an element $u \in \Hom_{\Gamma} ( e(Y), \Gamma )$, and suppose that $\overline{(f_*)^*}$ maps $\overline{u}$ to zero (where $\overline{u}$ denotes the element represented by $u$ in the quotient group displayed in the middle above). Then $u \circ f_*$ is an element of $\Hom_{\Gamma} ( e(X), \Gamma ) \cdot \eta_M$, and we can write $u \circ f_* = v \cdot \eta_M$ for some $v \in \Hom_{\Gamma} ( e(X), \Gamma )$. This gives 
$$u \circ \left ( \eta_Y \right )_* = u \circ \left ( f \circ g \right )_* = u \circ f_* \circ g_* = \left ( v \cdot \eta_M \right ) \circ g_*$$
as elements of $\Hom_{\Gamma} ( e(Y), \Gamma )$. Now take an element $w \in e(Y) = \Hom_{\C}(M,Y)$, and apply $u \circ ( \eta_Y )_*$ to it:
$$u \circ \left ( \eta_Y \right )_* \left ( w \right ) = u \left ( \eta_Y \circ w \right ) = u \left ( w \circ \eta_M \right ) = u \left ( w \right ) \circ \eta_M$$
Here, the second equality follows from the fact that $\eta$ is a linear transformation, and the third follows from the fact that $u$ is a homomorphism of right $\Gamma$-modules. Next, apply $( v \cdot \eta_M ) \circ g_*$ to $w$ instead:
$$\left ( v \cdot \eta_M \right ) \circ g_* \left ( w \right ) = \left ( v \cdot \eta_M \right ) \left ( g \circ w \right ) = v \left ( g \circ w \circ \eta_M \right ) = v \left ( g \circ w \right ) \circ \eta_M$$
Here, the second equality follows from the way $v \cdot \eta_M$ is defined as an element of $ \Hom_{\Gamma} ( e(X), \Gamma )$, and the third equality from the fact that $v$, like $u$ above, is right $\Gamma$-linear. Now since $u \circ ( \eta_Y )_* = ( v \cdot \eta_M ) \circ g_*$, and $\eta_M$ is regular on $\Gamma$, we see that $u(w) = v(g \circ w)$ for every element $w \in e(Y)$. This implies that $u = v \circ g_*$ as elements of $\Hom_{\Gamma} ( e(Y), \Gamma )$, giving
$$\overline{u} = \overline{v \circ g_*} = \overline{\left ( g_* \right )^*} \left ( \overline{v} \right )$$
We have now proved that $\Ker \overline{(f_*)^*} = \Im \overline{(g_*)^*}$, and similarly $\Ker \overline{(g_*)^*} = \Im \overline{(f_*)^*}$. Consequently, the the complex (\ref{sequence}) is totally acyclic.

We have now proved that the image of a factorization in $( \HFact_2 ( \C, 1_{\C}, \eta ), \Sigma, \Delta )$ is a totally acyclic complex of projective right $\overline{\Gamma}$-modules. Since the triangulated structure of $\Ktac \Proj ( \overline{\Gamma} )$ is the same as in the homotopy category $\K \Proj ( \overline{\Gamma} )$, we see from Corollary \ref{cor:functors} that we now have a functor
\begin{center}
\begin{tikzpicture}
\diagram{d}{3em}{4em}{
\left ( \HFact_2 \left ( \C, 1_{\C}, \eta \right ), \Sigma, \Delta \right ) & \Ktac \Proj ( \overline{\Gamma} ) \\
 };
\path[->, font = \scriptsize, auto]
(d-1-1) edge (d-1-2);
\end{tikzpicture}
\end{center}
of triangulated categories. Let us call this functor $F$; we must now show that it is both full and faithful.

To show that $F$ is full, we start with two factorizations
\begin{center}
\begin{tikzpicture}
\diagram{d}{2em}{4em}{
X & Y & X \\
U & V & U \\
 };
\path[->, font = \scriptsize, auto]
(d-1-1) edge node{$f$} (d-1-2)
(d-1-2) edge node{$g$} (d-1-3)
(d-2-1) edge node{$p$} (d-2-2)
(d-2-2) edge node{$q$} (d-2-3);
\end{tikzpicture}
\end{center}
in $( \HFact_2 ( \C, 1_{\C}, \eta ), \Sigma, \Delta )$, and a morphism $\varphi$ in $\Ktac \Proj ( \overline{\Gamma} )$ represented by a chain map
\begin{center}
\begin{tikzpicture}
\diagram{d}{3em}{4em}{
\cdots & \overline{e}(X) & \overline{e}(Y) & \overline{e}(X) & \overline{e}(Y) & \cdots \\
\cdots & \overline{e}(U) & \overline{e}(V) & \overline{e}(U) & \overline{e}(V) & \cdots \\
 };
\path[->, font = \scriptsize, auto]
% upper row
(d-1-1) edge (d-1-2)
(d-1-2) edge node{$\overline{f_*}$} (d-1-3)
(d-1-3) edge node{$\overline{g_*}$} (d-1-4)
(d-1-4) edge node{$\overline{f_*}$} (d-1-5)
(d-1-5) edge (d-1-6)
% lower row
(d-2-1) edge (d-2-2)
(d-2-2) edge node{$\overline{p_*}$} (d-2-3)
(d-2-3) edge node{$\overline{q_*}$} (d-2-4)
(d-2-4) edge node{$\overline{p_*}$} (d-2-5)
(d-2-5) edge (d-2-6)
% verticals
(d-1-2) edge node{$\varphi_2$} (d-2-2)
(d-1-3) edge node{$\varphi_1$} (d-2-3)
(d-1-4) edge node{$\varphi_0$} (d-2-4)
(d-1-5) edge node{$\varphi_{-1}$} (d-2-5);
\end{tikzpicture}
\end{center}
Now adapt the proof of \cite[Proposition 3.4]{BerghJorgensen}; this is possible since $\eta_M$ is central and regular in $\Gamma$. We then obtain a commutative diagram
\begin{equation*}\label{diagram}\tag{$\dagger \dagger$}
\begin{tikzpicture}
\diagram{d}{3em}{4em}{
e(X) & e(Y) & e(X) \\
e(U) & e(V) & e(U) \\
 };
\path[->, font = \scriptsize, auto]
% horizontals
(d-1-1) edge node{$f_*$} (d-1-2)
(d-1-2) edge node{$g_*$} (d-1-3)
(d-2-1) edge node{$p_*$} (d-2-2)
(d-2-2) edge node{$q_*$} (d-2-3)
% verticals
(d-1-1) edge node{$a$} (d-2-1)
(d-1-2) edge node{$b$} (d-2-2)
(d-1-3) edge node{$a$} (d-2-3);
\end{tikzpicture}
\end{equation*}
of projective right $\Gamma$-modules, in such a way that when we reduce modulo $\eta_M$ there are two diagonal homomorphisms
\begin{center}
\begin{tikzpicture}
\diagram{d}{3em}{4em}{
\cdots & \overline{e}(X) & \overline{e}(Y) & \overline{e}(X) & \overline{e}(Y) & \cdots \\
\cdots & \overline{e}(U) & \overline{e}(V) & \overline{e}(U) & \overline{e}(V) & \cdots \\
 };
\path[->, font = \scriptsize, auto]
% upper row
(d-1-1) edge (d-1-2)
(d-1-2) edge node{$\overline{f_*}$} (d-1-3)
(d-1-3) edge node{$\overline{g_*}$} (d-1-4)
(d-1-4) edge node{$\overline{f_*}$} (d-1-5)
(d-1-5) edge (d-1-6)
% lower row
(d-2-1) edge (d-2-2)
(d-2-2) edge node{$\overline{p_*}$} (d-2-3)
(d-2-3) edge node{$\overline{q_*}$} (d-2-4)
(d-2-4) edge node{$\overline{p_*}$} (d-2-5)
(d-2-5) edge (d-2-6)
% verticals
(d-1-2) edge node{$\overline{a} - \varphi_2$} (d-2-2)
(d-1-3) edge node{$\overline{b} - \varphi_1$} (d-2-3)
(d-1-4) edge node{$\overline{a} - \varphi_0$} (d-2-4)
(d-1-5) edge node{$\overline{b} - \varphi_{-1}$} (d-2-5)
% diagonals
(d-1-5) edge node[above]{$s_{-1}$} (d-2-4)
(d-1-4) edge node[above]{$s_0$} (d-2-3);
\end{tikzpicture}
\end{center}
satisfying $\overline{a} - \varphi_0 = s_{-1} \circ \overline{f_*} + \overline{q_*} \circ s_0$. As explained in the last part of the proof of \cite[Proposition 3.4]{BerghJorgensen}, these diagonal homomorphisms can be extended in both directions to a nullhomotopy; this uses only the fact that we are dealing with totally acyclic complexes of finitely generated projective (right) $\overline{\Gamma}$-modules. Therefore the periodic chain map $( \dots, \overline{a}, \overline{b}, \overline{a}, \dots )$ also represents the morphism $\varphi$ in $\Ktac \Proj ( \overline{\Gamma} )$.

Now consider the commutative diagram (\ref{diagram}) of projective right $\Gamma$-modules, and recall from Lemma \ref{lem:elementary1} that the functor $e(-)$ is fully faithful on $\C$. Since it is full, we can write the vertical homomorphisms as $a = \alpha_*$ and $b = \beta_*$, where $\alpha \colon X \to U$ and $\beta \colon Y \to V$ are morphisms in $\C$. Moreover, since the functor is faithful, the diagram
\begin{center}
\begin{tikzpicture}
\diagram{d}{3em}{4em}{
X & Y & X \\
U & V & U \\
 };
\path[->, font = \scriptsize, auto]
% horizontals
(d-1-1) edge node{$f$} (d-1-2)
(d-1-2) edge node{$g$} (d-1-3)
(d-2-1) edge node{$p$} (d-2-2)
(d-2-2) edge node{$q$} (d-2-3)
% verticals
(d-1-1) edge node{$\alpha$} (d-2-1)
(d-1-2) edge node{$\beta$} (d-2-2)
(d-1-3) edge node{$\alpha$} (d-2-3);
\end{tikzpicture}
\end{center}
commutes in $\C$, and therefore represents a morphism $\theta$ in $( \HFact_2 ( \C, 1_{\C}, \eta ), \Sigma, \Delta )$. From the above we see that $F( \theta ) = \varphi$, and this proves that the functor $F$ is full.

To show that  $F$ is faithful, we start with a morphism $\theta$ as above, and suppose that $F( \theta ) = 0$ in $\Ktac \Proj ( \overline{\Gamma} )$. In other words, there exists a nullhomotopy
\begin{center}
\begin{tikzpicture}
\diagram{d}{3em}{4em}{
\cdots & \overline{e}(X) & \overline{e}(Y) & \overline{e}(X) & \overline{e}(Y) & \cdots \\
\cdots & \overline{e}(U) & \overline{e}(V) & \overline{e}(U) & \overline{e}(V) & \cdots \\
 };
\path[->, font = \scriptsize, auto]
% upper row
(d-1-1) edge (d-1-2)
(d-1-2) edge node{$\overline{f_*}$} (d-1-3)
(d-1-3) edge node{$\overline{g_*}$} (d-1-4)
(d-1-4) edge node{$\overline{f_*}$} (d-1-5)
(d-1-5) edge (d-1-6)
% lower row
(d-2-1) edge (d-2-2)
(d-2-2) edge node{$\overline{p_*}$} (d-2-3)
(d-2-3) edge node{$\overline{q_*}$} (d-2-4)
(d-2-4) edge node{$\overline{p_*}$} (d-2-5)
(d-2-5) edge (d-2-6)
% verticals
(d-1-2) edge node{$\overline{\alpha_*}$} (d-2-2)
(d-1-3) edge node{$\overline{\beta_*}$} (d-2-3)
(d-1-4) edge node{$\overline{\alpha_*}$} (d-2-4)
(d-1-5) edge node{$\overline{\beta_*}$} (d-2-5)
% diagonals
(d-1-3) edge node[above]{$s_1$} (d-2-2)
(d-1-4) edge node[above]{$s_0$} (d-2-3)
(d-1-5) edge node[above]{$s_{-1}$} (d-2-4);
\end{tikzpicture}
\end{center}
This time we adapt the proof of \cite[Proposition 3.3]{BerghJorgensen}; we can do this for the same reason as before. As a result, we obtain two diagonal homomorphisms
\begin{center}
\begin{tikzpicture}
\diagram{d}{3em}{4em}{
e(X) & e(Y) & e(X) \\
e(U) & e(V) & e(U) \\
 };
\path[->, font = \scriptsize, auto]
% horizontals
(d-1-1) edge node{$f_*$} (d-1-2)
(d-1-2) edge node{$g_*$} (d-1-3)
(d-2-1) edge node{$p_*$} (d-2-2)
(d-2-2) edge node{$q_*$} (d-2-3)
% verticals
(d-1-1) edge node{$\alpha_*$} (d-2-1)
(d-1-2) edge node{$\beta_*$} (d-2-2)
(d-1-3) edge node{$\alpha_*$} (d-2-3)
% diagonals
(d-1-2) edge node[above]{$y$} (d-2-1)
(d-1-3) edge node[above]{$x$} (d-2-2);
\end{tikzpicture}
\end{center}
of right $\Gamma$-modules, with $\alpha_* = y \circ f_* + q_* \circ x$ and $\beta_* = x \circ g_* + p_* \circ y$. As above, since the functor $e(-)$ is fully faithful by Lemma \ref{lem:elementary1}, we can write $y = s_*$ and $x = t_*$ for some morphisms $s \colon Y \to U$ and $t \colon X \to V$ in $\C$, with $\alpha = s \circ f + q \circ t$ and $\beta = t \circ g + p \circ s$. Therefore, the diagram
\begin{center}
\begin{tikzpicture}
\diagram{d}{3em}{4em}{
X & Y & X \\
U & V & U \\
 };
\path[->, font = \scriptsize, auto]
% horizontals
(d-1-1) edge node{$f$} (d-1-2)
(d-1-2) edge node{$g$} (d-1-3)
(d-2-1) edge node{$p$} (d-2-2)
(d-2-2) edge node{$q$} (d-2-3)
% verticals
(d-1-1) edge node{$\alpha$} (d-2-1)
(d-1-2) edge node{$\beta$} (d-2-2)
(d-1-3) edge node{$\alpha$} (d-2-3)
% diagonals
(d-1-2) edge node[above]{$s$} (d-2-1)
(d-1-3) edge node[above]{$t$} (d-2-2);
\end{tikzpicture}
\end{center}
displays a nullhomotopy for the morphism representing $\theta$, showing that $\theta = 0$ in $( \HFact_2 ( \C, 1_{\C}, \eta ), \Sigma, \Delta )$. This proves that the functor $F$ is faithful.
\end{proof}

Here is an example.

\begin{example}\label{ex:generalmodule}
Let $R$ be a commutative ring, $M$ an $R$-module, and $x \in R$ fixed element. Then multiplication by $x$ induces a natural transformation $\eta_x \colon 1_{\Mod R} \to 1_{\Mod R}$, where $\Mod R$ is the category of $R$-modules. The transformation restricts to a natural transformation $\eta_x \colon 1_{\add M} \to 1_{\add M}$, and we can form the homotopy category $( \HFact_2 ( \add M, 1_{\add_M}, \eta_x ), \Sigma, \Delta )$. If $x$ is $M$-regular, then $(\eta_x)_M$, which is just multiplication by $x$, is a regular element of $\Hom_R(M,M)$. Then from Theorem \ref{thm:fullyfaithful} we obtain a fully faithful triangle functor
\begin{center}
\begin{tikzpicture}
\diagram{d}{3em}{4em}{
\left ( \HFact_2 \left ( \add M, 1_{\add_M}, \eta_x \right ), \Sigma, \Delta \right ) & \Ktac \Proj \left ( \Gamma_M/(x) \right ) \\
 };
\path[->, font = \scriptsize, auto]
(d-1-1) edge (d-1-2);
\end{tikzpicture}
\end{center}
where $\Gamma_M = \Hom_R(M,M)$. For example, when $M = R$, then $\add M = \proj R$, and we recover the classical fully faithful triangle functor 
\begin{center}
\begin{tikzpicture}
\diagram{d}{3em}{4em}{
\left ( \HFact_2 \left ( \Proj (R), 1_{\Proj (R)}, \eta_x \right ), \Sigma, \Delta \right ) & \Ktac \Proj \left ( R/(x) \right ) \\
 };
\path[->, font = \scriptsize, auto]
(d-1-1) edge (d-1-2);
\end{tikzpicture}
\end{center}
from the homotopy category of matrix factorizations of $x$ to the homotopy category of totally acyclic complexes of finitely generated projective modules over $R/(x)$; see Example \ref{ex:matrixfactorizations}. This is an equivalence when $R$ is a regular local ring.
\end{example}

The previous example treats the trivial case when $R$ is a commutative ring and $M = R$; in this case, $\Hom_R(M,M)$ is just $R$ itself, so that Theorem \ref{thm:fullyfaithful} is just a reformulation of \cite[Theorem 3.5]{BerghJorgensen}. As mentioned, when $R$ is a regular local ring, then the functor is an equivalence, by \cite{Buchweitz} and \cite{Orlov}. However, the following result shows that the functor in Theorem \ref{thm:fullyfaithful} is always an equivalence whenever $\Hom_{\C}(M,M)$ is a commutative regular local ring, and not just in the trivial case above.

\begin{theorem}\label{thm:regularlocal}
With the same assumptions as in \emph{Theorem \ref{thm:fullyfaithful}}, suppose that $\Gamma_M$ is a commutative regular local ring, and that $\eta_M$ is an element in the square of its maximal ideal. Then the triangle functor
\begin{center}
\begin{tikzpicture}
\diagram{d}{3em}{4em}{
\left ( \HFact_2 \left ( \C, 1_{\C}, \eta \right ), \Sigma, \Delta \right ) & \Ktac \Proj \left ( \Gamma_M/( \eta_M ) \right ) \\
 };
\path[->, font = \scriptsize, auto]
(d-1-1) edge (d-1-2);
\end{tikzpicture}
\end{center}
is an equivalence.
\end{theorem}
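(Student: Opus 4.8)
The plan is to combine Theorem~\ref{thm:fullyfaithful} with the classical theory of matrix factorizations over a regular local ring. Write $\Gamma = \Gamma_M = \Hom_{\C}(M,M)$, let $\m$ denote its maximal ideal, and set $\overline{\Gamma} = \Gamma/(\eta_M)$; since $\Gamma$ is regular local and $\eta_M$ is a non-zerodivisor lying in $\m^2$, the ring $\overline{\Gamma}$ is a genuine hypersurface quotient of a regular local ring, i.e.\ the setting of \cite{Eisenbud} and \cite{Buchweitz, Orlov} recalled in Example~\ref{ex:matrixfactorizations}. Denote by $F$ the triangle functor in the statement; by Theorem~\ref{thm:fullyfaithful} it is fully faithful, so it suffices to prove that $F$ is essentially surjective. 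As in the proof of Theorem~\ref{thm:fullyfaithful} I write $e(X) = \Hom_{\C}(M,X)$ and $\overline{e}(X) = e(X)/e(X) \cdot \eta_M$ for $X \in \C$, and recall from Corollary~\ref{cor:functors} that $F$ sends a factorization $X \xrightarrow{f} Y \xrightarrow{g} X$ to the two-periodic complex $\cdots \to \overline{e}(X) \xrightarrow{\overline{f_*}} \overline{e}(Y) \xrightarrow{\overline{g_*}} \overline{e}(X) \to \cdots$ of finitely generated projective $\overline{\Gamma}$-modules.

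The first step is to record the effect of $e$ on $\eta$. For each $n \ge 1$ the object $M^{\oplus n}$ lies in $\C = \add M$, so by Lemma~\ref{lem:elementary1} the functor $e$ identifies the ring $\Hom_{\C}(M^{\oplus n}, M^{\oplus n})$ with the matrix ring $M_n(\Gamma) = \Hom_{\Gamma}(\Gamma^n,\Gamma^n)$, where $\Gamma^n = e(M^{\oplus n})$. Because $\eta$ is additive and $\eta_M$ is central in $\Gamma$, under this identification $e(\eta_{M^{\oplus n}})$ is multiplication by the scalar matrix $\eta_M \cdot I_n$, i.e.\ the map $x \mapsto \eta_M x$ on $\Gamma^n$.

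The second step is the density argument. Let $T$ be an object of $\Ktac \Proj(\overline{\Gamma})$, that is, a totally acyclic complex of finitely generated projective — equivalently, since $\overline{\Gamma}$ is local, finitely generated free — $\overline{\Gamma}$-modules. By the structure theorem for such complexes over a hypersurface (Eisenbud \cite{Eisenbud}; see also \cite{Buchweitz, Orlov}), $T$ is homotopy equivalent to the complex obtained by reducing a matrix factorization of $\eta_M$ over $\Gamma$ modulo $\eta_M$ and extending it two-periodically: there are $A, B \in M_n(\Gamma)$ with $AB = BA = \eta_M \cdot I_n$ such that
$$T \;\simeq\; \bigl( \cdots \to \overline{\Gamma}^n \xrightarrow{\;\overline{A}\;} \overline{\Gamma}^n \xrightarrow{\;\overline{B}\;} \overline{\Gamma}^n \to \cdots \bigr).$$
By Lemma~\ref{lem:elementary1} there are unique morphisms $\widetilde{A}, \widetilde{B} \colon M^{\oplus n} \to M^{\oplus n}$ in $\C$ with $e(\widetilde{A}) = A$ and $e(\widetilde{B}) = B$. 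Since $e$ is faithful on $\add M$ and, by the first step, $e(\widetilde{A}\widetilde{B}) = AB = \eta_M \cdot I_n = e(\eta_{M^{\oplus n}})$ and likewise $e(\widetilde{B}\widetilde{A}) = e(\eta_{M^{\oplus n}})$, it follows that $\widetilde{A}\widetilde{B} = \eta_{M^{\oplus n}} = \widetilde{B}\widetilde{A}$. Hence
$$\mathcal{F} \colon \qquad M^{\oplus n} \xrightarrow{\;\widetilde{A}\;} M^{\oplus n} \xrightarrow{\;\widetilde{B}\;} M^{\oplus n}$$
is an object of $\HFact_2(\C, 1_{\C}, \eta)$, and by the description of $F$ recalled above, $F(\mathcal{F})$ is precisely the complex $\cdots \to \overline{\Gamma}^n \xrightarrow{\overline{A}} \overline{\Gamma}^n \xrightarrow{\overline{B}} \overline{\Gamma}^n \to \cdots$, which is homotopy equivalent to $T$. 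Thus $F$ is essentially surjective, and being fully faithful by Theorem~\ref{thm:fullyfaithful}, it is an equivalence of triangulated categories.

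I expect the only genuinely nontrivial point to be the correct invocation of the classical structure theorem in the second step — that every totally acyclic complex of finitely generated free modules over the hypersurface $\overline{\Gamma}$ is homotopy equivalent to the reduction of a matrix factorization of $\eta_M$ over the ambient regular local ring $\Gamma$; this is where the hypotheses that $\Gamma$ is regular local and $\eta_M \in \m^2$ are used. Everything else is bookkeeping built on Lemma~\ref{lem:elementary1} and the already-established Theorem~\ref{thm:fullyfaithful}. It is worth noting that only the free objects $M^{\oplus n}$ are needed in the argument — because $\overline{\Gamma}$ is local, so its finitely generated projectives are free — so that no idempotent-completeness assumption on $\C$ is required.
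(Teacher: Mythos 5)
Your proof is correct and follows essentially the same route as the paper's: use Theorem~\ref{thm:fullyfaithful} for full faithfulness, invoke the Eisenbud/Buchweitz/Orlov equivalence between matrix factorizations of $\eta_M$ over $\Gamma_M$ and totally acyclic complexes over $\Gamma_M/(\eta_M)$ (using that projectives over the local ring $\Gamma_M$ are free), and then lift a matrix factorization $(A,B)$ of $\eta_M$ to a factorization $(\widetilde{A},\widetilde{B})$ of $\eta$ on $M^{\oplus n}$ via the full faithfulness of $e=\Hom_{\C}(M,-)$ from Lemma~\ref{lem:elementary1}. You merely spell out more explicitly the check that $e(\eta_{M^{\oplus n}})$ is scalar multiplication by $\eta_M$ and that $\widetilde{A}\widetilde{B}=\eta_{M^{\oplus n}}=\widetilde{B}\widetilde{A}$, which the paper leaves implicit.
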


\begin{proof}
By Theorem \ref{thm:fullyfaithful}, we only need to show that the functor is dense. Since $\Gamma_M$ is a commutative regular local ring, reduction modulo $\eta_M$ induces a triangle equivalence from the homotopy category of matrix factorizations of $\eta_M$ (over $\Gamma_M$) to $\Ktac \Proj ( \Gamma_M/( \eta_M ) )$; this is essentially proved in \cite{Eisenbud}. Since our functor 
\begin{center}
\begin{tikzpicture}
\diagram{d}{3em}{4em}{
\left ( \HFact_2 \left ( \C, 1_{\C}, \eta \right ), \Sigma, \Delta \right ) & \Ktac \Proj \left ( \Gamma_M/( \eta_M ) \right ) \\
 };
\path[->, font = \scriptsize, auto]
(d-1-1) edge (d-1-2);
\end{tikzpicture}
\end{center}
is $\Hom_{\C}(M,-)$ followed by reduction modulo $\eta_M$, we are done if we can show that every matrix factorization of $\eta_M$ over $\Gamma_M$ is in the image of the functor $\Hom_{\C}(M,-)$. However, this is a direct consequence of Lemma \ref{lem:elementary1}, together with the fact that since $\Gamma_M$ is local, every projective module is free and therefore of the form $\Hom_{\C}(M,M^n)$ for some integer $n \ge 1$.
\end{proof}

We end with an example.

\begin{example}\label{ex:regularlocal}
Let $k$ be a field, $k \llbracket x,y \rrbracket$ the power series ring in two variables, and $R$ the hypersurface $k \llbracket x,y \rrbracket / (xy)$. Furthermore, let $I$ be the ideal if $R$ generated by $x$, and put $\C = \add I$. A homomorphism $f \in \Hom_R(I,I)$ is uniquely determined by its action on $x$: since $xy = 0$ in $R$, there is a power series $p_f(x) \in k \llbracket x \rrbracket$ with $f(x) = p_f(x)x$. The assignment $f \mapsto p_f$ is easily seen to be an isomorphism $\Hom_R(I,I) \to k \llbracket x \rrbracket$ of rings, hence $\Hom_R(I,I)$ is a commutative regular local ring. For any $n \ge 2$, the element $x^n$ is regular on the ideal $I$, hence from Theorem \ref{thm:regularlocal} we obtain a triangle equivalence
\begin{center}
\begin{tikzpicture}
\diagram{d}{3em}{4em}{
\left ( \HFact_2 \left ( \C, 1_{\C}, x^n \right ), \Sigma, \Delta \right ) & \Ktac \Proj \left ( \Gamma_I/( x^n ) \right ) \\
 };
\path[->, font = \scriptsize, auto]
(d-1-1) edge (d-1-2);
\end{tikzpicture}
\end{center}
We may of course replace $\Ktac \Proj ( \Gamma_I/( x^n ) )$ with $\Ktac \Proj ( k \llbracket x \rrbracket /( x^n ) )$.

Next, let $k \llbracket x,y,z \rrbracket$ be the power series ring in three variables, and $S$ the quotient ring $k \llbracket x,y,z \rrbracket / (xz,yz)$. Let $J$ be the ideal of $S$ generated by $x$, and put $\C = \add J$. Given a homomorphism $g \in \Hom_S(J,J)$, there is a unique power series $q_g(x,y) \in k \llbracket x,y \rrbracket$ with $g(x) = q_g(x,y)x$, and the assignment $g \mapsto q_g$ is an isomorphism $\Hom_S(J,J) \to k \llbracket x,y \rrbracket$ of rings. Therefore $\Hom_S(J,J)$ is commutative regular local, and so since $xy$ is regular on $J$ we obtain a triangle equivalence
\begin{center}
\begin{tikzpicture}
\diagram{d}{3em}{4em}{
\left ( \HFact_2 \left ( \C, 1_{\C}, xy \right ), \Sigma, \Delta \right ) & \Ktac \Proj \left ( \Gamma_J/( xy ) \right ) \\
 };
\path[->, font = \scriptsize, auto]
(d-1-1) edge (d-1-2);
\end{tikzpicture}
\end{center}
where we may replace $\Ktac \Proj ( \Gamma_J/( xy ) )$ with $\Ktac \Proj ( k \llbracket x,y \rrbracket /( xy ) )$.
\end{example}

\end{document}